\def\le{\leqslant}
\def\ge{\geqslant}
\def\cc#1{\textcolor{red}{#1}}
\begin{document}
%
    
\newtheorem{theorem}{Theorem}
\newtheorem{lemma}[theorem]{Lemma}
\newtheorem{example}[theorem]{Example}
\newtheorem{algol}{Algorithm}
\newtheorem{corollary}[theorem]{Corollary}
\newtheorem{prop}[theorem]{Proposition}
\newtheorem{proposition}[theorem]{Proposition}
\newtheorem{problem}[theorem]{Problem}
\newtheorem{conj}[theorem]{Conjecture}
\newtheorem{cor}[theorem]{Corollary}

\theoremstyle{remark}
\newtheorem{definition}[theorem]{Definition}
\newtheorem{question}[theorem]{Question}
\newtheorem{remark}[theorem]{Remark}
\newtheorem*{acknowledgement}{Acknowledgements}

\numberwithin{equation}{section}
\numberwithin{theorem}{section}
\numberwithin{table}{section}
\numberwithin{figure}{section}

\allowdisplaybreaks

\definecolor{olive}{rgb}{0.3, 0.4, .1}
\definecolor{dgreen}{rgb}{0.,0.5,0.}

\def\cc#1{\textcolor{red}{#1}} 

\def\Cde{C_{d}}
\def\tCde{\widetilde{C}_{d}}

\definecolor{dgreen}{rgb}{0.,0.6,0.}
\def\tgreen#1{\begin{color}{dgreen}{\it{#1}}\end{color}}
\def\tblue#1{\begin{color}{blue}{\it{#1}}\end{color}}
\def\tred#1{\begin{color}{red}#1\end{color}}
\def\tmagenta#1{\begin{color}{magenta}{\it{#1}}\end{color}}
\def\tNavyBlue#1{\begin{color}{NavyBlue}{\it{#1}}\end{color}}
\def\tMaroon#1{\begin{color}{Maroon}{\it{#1}}\end{color}}

\def\ndiv{\nmid}
\def \balpha{\bm{\alpha}}
\def \bbeta{\bm{\beta}}
\def \bgamma{\bm{\gamma}}
\def \bdelta{\bm{\delta}}
\def \blambda{\bm{\lambda}}
\def \bchi{\bm{\chi}}
\def \bphi{\bm{\varphi}}
\def \bpsi{\bm{\psi}}
\def \bnu{\bm{\nu}}
\def \bomega{\bm{\omega}}

\def\td{\widetilde d}
\def \te{\widetilde e} 
\def\talpha{\widetilde \alpha}
\def \tbeta{\widetilde \beta} 

\def \tcA{\widetilde {\cA}}
\def  \tcB{\widetilde{\cB}}

\def\vh{\vec{h}} 
\def\vj{\vec{j}} 

\def\vk{\vec{k}} 
\def\vl{\vec{l}} 

\def\vu{\vec{u}} 
\def\vv{\vec{v}} 

\def\vx{\vec{x}} 
\def\vy{\vec{y}}


 \def\mand{\qquad\mbox{and}\qquad}

\def\cA{{\mathcal A}}
\def\cB{{\mathcal B}}
\def\cC{{\mathcal C}}
\def\cD{{\mathcal D}}
\def\cE{{\mathcal E}}
\def\cF{{\mathcal F}}
\def\cG{{\mathcal G}}
\def\cH{{\mathcal H}}
\def\cI{{\mathcal I}}
\def\cJ{{\mathcal J}}
\def\cK{{\mathcal K}}
\def\cL{{\mathcal L}}
\def\cM{{\mathcal M}}
\def\cN{{\mathcal N}}
\def\cO{{\mathcal O}}
\def\cP{{\mathcal P}}
\def\cQ{{\mathcal Q}}
\def\cR{{\mathcal R}}
\def\cS{{\mathcal S}}
\def\cT{{\mathcal T}}
\def\cU{{\mathcal U}}
\def\cV{{\mathcal V}}
\def\cW{{\mathcal W}}
\def\cX{{\mathcal X}}
\def\cY{{\mathcal Y}}
\def\cZ{{\mathcal Z}}

\def\C{\mathbb{C}}
\def\F{\mathbb{F}}
\def\K{\mathbb{K}}
\def\Z{\mathbb{Z}}
\def\R{\mathbb{R}}
\def\Q{\mathbb{Q}}
\def\N{\mathbb{N}}
\def\L{\mathbb{L}}
\def\M{\textsf{M}}
\def\U{\mathbb{U}}
\def\P{\mathbb{P}}
\def\A{\mathbb{A}}
\def\fp{\mathfrak{p}}
\def\fq{\mathfrak{q}}
\def\n{\mathfrak{n}}
\def\X{\mathcal{X}}
\def\x{\textrm{\bf x}}
\def\w{\textrm{\bf w}}
\def\ovQ{\overline{\Q}}
\def \Kab{\K^{\mathrm{ab}}}
\def \Qab{\Q^{\mathrm{ab}}}
\def \Qtr{\Q^{\mathrm{tr}}}
\def \Kc{\K^{\mathrm{c}}}
\def \Qc{\Q^{\mathrm{c}}}
\def\ZK{\Z_\K}
\def\ZKS{\Z_{\K,\cS}}
\def\ZKSf{\Z_{\K,\cS_f}}
\def\RSf{R_{\cS_{f}}}
\def\RTf{R_{\cT_{f}}}

\def\S{\mathcal{S}}
\def\vec#1{\mathbf{#1}}
\def\ov#1{{\overline{#1}}}
\def\Sp{{\operatorname{S}}}
\def\Gm{\G_{\textup{m}}}
\def\fA{{\mathfrak A}}
\def\fB{{\mathfrak B}}
\def\fM{{\mathfrak M}}

\def \brho{\bm{\rho}}
\def \btau{\bm{\tau}}

\def\house#1{{%
    \setbox0=\hbox{$#1$}
    \vrule height \dimexpr\ht0+1.4pt width .5pt depth \dp0\relax
    \vrule height \dimexpr\ht0+1.4pt width \dimexpr\wd0+2pt depth \dimexpr-\ht0-1pt\relax
    \llap{$#1$\kern1pt}
    \vrule height \dimexpr\ht0+1.4pt width .5pt depth \dp0\relax}}


\newenvironment{notation}[0]{%
  \begin{list}%
    {}%
    {\setlength{\itemindent}{0pt}
     \setlength{\labelwidth}{1\parindent}
     \setlength{\labelsep}{\parindent}
     \setlength{\leftmargin}{2\parindent}
     \setlength{\itemsep}{0pt}
     }%
   }%
  {\end{list}}

\newenvironment{parts}[0]{%
  \begin{list}{}%
    {\setlength{\itemindent}{0pt}
     \setlength{\labelwidth}{1.5\parindent}
     \setlength{\labelsep}{.5\parindent}
     \setlength{\leftmargin}{2\parindent}
     \setlength{\itemsep}{0pt}
     }%
   }%
  {\end{list}}
\newcommand{\Part}[1]{\item[\upshape#1]}

\def\Case#1#2{%
\smallskip\paragraph{\textbf{\boldmath Case #1: #2.}}\hfil\break\ignorespaces}

\def\Subcase#1#2{%
\smallskip\paragraph{\textit{\boldmath Subcase #1: #2.}}\hfil\break\ignorespaces}

\renewcommand{\a}{\alpha}
\renewcommand{\b}{\beta}
\newcommand{\g}{\gamma}
\renewcommand{\d}{\delta}
\newcommand{\e}{\epsilon}
\newcommand{\f}{\varphi}
\newcommand{\fhat}{\hat\varphi}
\newcommand{\bfphi}{{\boldsymbol{\f}}}
\renewcommand{\l}{\lambda}
\renewcommand{\k}{\kappa}
\newcommand{\lhat}{\hat\lambda}
\newcommand{\bfmu}{{\boldsymbol{\mu}}}
\renewcommand{\o}{\omega}
\renewcommand{\r}{\rho}
\newcommand{\rbar}{{\ov\rho}}
\newcommand{\s}{\sigma}
\newcommand{\sbar}{{\ov\sigma}}
\renewcommand{\t}{\tau}
\newcommand{\z}{\zeta}


\newcommand{\ga}{{\mathfrak{a}}}
\newcommand{\gb}{{\mathfrak{b}}}
\newcommand{\gn}{{\mathfrak{n}}}
\newcommand{\gp}{{\mathfrak{p}}}
\newcommand{\gP}{{\mathfrak{P}}}
\newcommand{\gq}{{\mathfrak{q}}}

\newcommand{\Abar}{{\ov A}}
\newcommand{\Ebar}{{\ov E}}
\newcommand{\kbar}{{\ov k}}
\newcommand{\Kbar}{{\ov K}}
\newcommand{\Pbar}{{\ov P}}
\newcommand{\Sbar}{{\ov S}}
\newcommand{\Tbar}{{\ov T}}
\newcommand{\gbar}{{\ov\gamma}}
\newcommand{\lbar}{{\ov\lambda}}
\newcommand{\ybar}{{\ov y}}
\newcommand{\phibar}{{\ov\f}}

\newcommand{\Acal}{{\mathcal A}}
\newcommand{\Bcal}{{\mathcal B}}
\newcommand{\Ccal}{{\mathcal C}}
\newcommand{\Dcal}{{\mathcal D}}
\newcommand{\Ecal}{{\mathcal E}}
\newcommand{\Fcal}{{\mathcal F}}
\newcommand{\Gcal}{{\mathcal G}}
\newcommand{\Hcal}{{\mathcal H}}
\newcommand{\Ical}{{\mathcal I}}
\newcommand{\Jcal}{{\mathcal J}}
\newcommand{\Kcal}{{\mathcal K}}
\newcommand{\Lcal}{{\mathcal L}}
\newcommand{\Mcal}{{\mathcal M}}
\newcommand{\Ncal}{{\mathcal N}}
\newcommand{\Ocal}{{\mathcal O}}
\newcommand{\Pcal}{{\mathcal P}}
\newcommand{\Qcal}{{\mathcal Q}}
\newcommand{\Rcal}{{\mathcal R}}
\newcommand{\Scal}{{\mathcal S}}
\newcommand{\Tcal}{{\mathcal T}}
\newcommand{\Ucal}{{\mathcal U}}
\newcommand{\Vcal}{{\mathcal V}}
\newcommand{\Wcal}{{\mathcal W}}
\newcommand{\Xcal}{{\mathcal X}}
\newcommand{\Ycal}{{\mathcal Y}}
\newcommand{\Zcal}{{\mathcal Z}}

\renewcommand{\AA}{\mathbb{A}}
\newcommand{\BB}{\mathbb{B}}
\newcommand{\CC}{\mathbb{C}}
\newcommand{\FF}{\mathbb{F}}
\newcommand{\GG}{\mathbb{G}}
\newcommand{\KK}{\mathbb{K}}
\newcommand{\NN}{\mathbb{N}}
\newcommand{\PP}{\mathbb{P}}
\newcommand{\QQ}{\mathbb{Q}}
\newcommand{\RR}{\mathbb{R}}
\newcommand{\ZZ}{\mathbb{Z}}

\newcommand{\bfa}{{\boldsymbol a}}
\newcommand{\bfb}{{\boldsymbol b}}
\newcommand{\bfc}{{\boldsymbol c}}
\newcommand{\bfd}{{\boldsymbol d}}
\newcommand{\bfe}{{\boldsymbol e}}
\newcommand{\bff}{{\boldsymbol f}}
\newcommand{\bfg}{{\boldsymbol g}}
\newcommand{\bfi}{{\boldsymbol i}}
\newcommand{\bfj}{{\boldsymbol j}}
\newcommand{\bfk}{{\boldsymbol k}}
\newcommand{\bfm}{{\boldsymbol m}}
\newcommand{\bfp}{{\boldsymbol p}}
\newcommand{\bfr}{{\boldsymbol r}}
\newcommand{\bfs}{{\boldsymbol s}}
\newcommand{\bft}{{\boldsymbol t}}
\newcommand{\bfu}{{\boldsymbol u}}
\newcommand{\bfv}{{\boldsymbol v}}
\newcommand{\bfw}{{\boldsymbol w}}
\newcommand{\bfx}{{\boldsymbol x}}
\newcommand{\bfy}{{\boldsymbol y}}
\newcommand{\bfz}{{\boldsymbol z}}
\newcommand{\bfA}{{\boldsymbol A}}
\newcommand{\bfF}{{\boldsymbol F}}
\newcommand{\bfB}{{\boldsymbol B}}
\newcommand{\bfD}{{\boldsymbol D}}
\newcommand{\bfG}{{\boldsymbol G}}
\newcommand{\bfI}{{\boldsymbol I}}
\newcommand{\bfM}{{\boldsymbol M}}
\newcommand{\bfP}{{\boldsymbol P}}
\newcommand{\bfX}{{\boldsymbol X}}
\newcommand{\bfY}{{\boldsymbol Y}}
\newcommand{\bfzero}{{\boldsymbol{0}}}
\newcommand{\bfone}{{\boldsymbol{1}}}

\newcommand{\aff}{{\textup{aff}}}
\newcommand{\Aut}{\operatorname{Aut}}
\newcommand{\Berk}{{\textup{Berk}}}
\newcommand{\Birat}{\operatorname{Birat}}
\newcommand{\characteristic}{\operatorname{char}}
\newcommand{\codim}{\operatorname{codim}}
\newcommand{\Crit}{\operatorname{Crit}}
\newcommand{\critwt}{\operatorname{critwt}} 
\newcommand{\Cycle}{\operatorname{Cycles}}
\newcommand{\diag}{\operatorname{diag}}
\newcommand{\Disc}{\operatorname{Disc}}
\newcommand{\Div}{\operatorname{Div}}
\newcommand{\Dom}{\operatorname{Dom}}
\newcommand{\End}{\operatorname{End}}
\newcommand{\ExtOrbit}{\mathcal{EO}} 
\newcommand{\Fbar}{{\ov{F}}}
\newcommand{\Fix}{\operatorname{Fix}}
\newcommand{\FOD}{\operatorname{FOD}}
\newcommand{\FOM}{\operatorname{FOM}}
\newcommand{\Gal}{\operatorname{Gal}}
\newcommand{\genus}{\operatorname{genus}}
\newcommand{\GITQuot}{/\!/}
\newcommand{\GL}{\operatorname{GL}}
\newcommand{\GR}{\operatorname{\mathcal{G\!R}}}
\newcommand{\Hom}{\operatorname{Hom}}
\newcommand{\Index}{\operatorname{Index}}
\newcommand{\Image}{\operatorname{Image}}
\newcommand{\Isom}{\operatorname{Isom}}
\newcommand{\hhat}{{\hat h}}
\newcommand{\Ker}{{\operatorname{ker}}}
\newcommand{\Ksep}{K^{\textup{sep}}}  
\newcommand{\lcm}{{\operatorname{lcm}}}
\newcommand{\LCM}{{\operatorname{LCM}}}
\newcommand{\Lift}{\operatorname{Lift}}
\newcommand{\limstar}{\lim\nolimits^*}
\newcommand{\limstarn}{\lim_{\hidewidth n\to\infty\hidewidth}{\!}^*{\,}}
\newcommand{\llog}{\log\log}
\newcommand{\logplus}{\log^{\scriptscriptstyle+}}
\newcommand{\Mat}{\operatorname{Mat}}
\newcommand{\maxplus}{\operatornamewithlimits{\textup{max}^{\scriptscriptstyle+}}}
\newcommand{\MOD}[1]{~(\textup{mod}~#1)}
\newcommand{\Mor}{\operatorname{Mor}}
\newcommand{\Moduli}{\mathcal{M}}
\newcommand{\Norm}{{\operatorname{\mathsf{N}}}}
\newcommand{\notdivide}{\nmid}
\newcommand{\normalsubgroup}{\triangleleft}
\newcommand{\NS}{\operatorname{NS}}
\newcommand{\onto}{\twoheadrightarrow}
\newcommand{\ord}{\operatorname{ord}}
\newcommand{\Orbit}{\mathcal{O}}
\newcommand{\Per}{\operatorname{Per}}
\newcommand{\Perp}{\operatorname{Perp}}
\newcommand{\PrePer}{\operatorname{PrePer}}
\newcommand{\PGL}{\operatorname{PGL}}
\newcommand{\Pic}{\operatorname{Pic}}
\newcommand{\Prob}{\operatorname{Prob}}
\newcommand{\Proj}{\operatorname{Proj}}
\newcommand{\Qbar}{{\ov{\QQ}}}
\newcommand{\rank}{\operatorname{rank}}
\newcommand{\Rat}{\operatorname{Rat}}
\newcommand{\Res}{{\operatorname{Res}}}
\newcommand{\Resultant}{\operatorname{Res}}
\renewcommand{\setminus}{\smallsetminus}
\newcommand{\sgn}{\operatorname{sgn}}
\newcommand{\SL}{\operatorname{SL}}
\newcommand{\Span}{\operatorname{Span}}
\newcommand{\Spec}{\operatorname{Spec}}
\renewcommand{\ss}{{\textup{ss}}}
\newcommand{\stab}{{\textup{stab}}}
\newcommand{\Stab}{\operatorname{Stab}}
\newcommand{\Support}{\operatorname{Supp}}
\newcommand{\Sym}{\operatorname{Sym}}  
\newcommand{\tors}{{\textup{tor}}}
\newcommand{\Trace}{\operatorname{Trace}}
\newcommand{\trianglebin}{\mathbin{\triangle}} 
\newcommand{\tr}{{\textup{tr}}} 
\newcommand{\UHP}{{\mathfrak{h}}}    
\newcommand{\Wander}{\operatorname{Wander}}
\newcommand{\<}{\langle}
\renewcommand{\>}{\rangle}

\newcommand{\pmodintext}[1]{~\textup{(mod}~#1\textup{)}}
\newcommand{\ds}{\displaystyle}
\newcommand{\longhookrightarrow}{\lhook\joinrel\longrightarrow}
\newcommand{\longonto}{\relbar\joinrel\twoheadrightarrow}
\newcommand{\SmallMatrix}[1]{%
  \left(\begin{smallmatrix} #1 \end{smallmatrix}\right)}
  
  \def\({\left(}
\def\){\right)}
\def\fl#1{\left\lfloor#1\right\rfloor}
\def\rf#1{\left\lceil#1\right\rceil}


\title[Local--global principle and  additive combinatorics]
{An effective local--global principle  
and additive combinatorics in finite fields}

\author[B.\ Kerr]{Bryce Kerr}
\address{Max Planck Institute for Mathematics, Bonn, Germany}
\email{bryce.kerr89@gmail.com}

\author[J. Mello] {Jorge Mello}
\address{Max Planck Institute for Mathematics, Bonn, Germany}
\email{jbmello@yorku.ca}

\author[I. E. Shparlinski] {Igor E. Shparlinski}
\address{School of Mathematics and Statistics, University of New South Wales, Sydney NSW 2052, Australia}
\email{igor.shparlinski@unsw.edu.au}

\subjclass[2010]{11D79, 11G25,  11P70}
\keywords{Additive combinatorics, modular reduction of systems of polynomials} 

\begin{abstract} 
We use recent results about linking the number of zeros on algebraic varieties over $\C$, defined by
polynomials with integer coefficients, and 
on their reductions modulo sufficiently large primes to study congruences with products and 
reciprocals of linear forms. This allows us to make some progress towards a question of 
B.~Murphy, G.~Petridis, O.~Roche-Newton, M.~Rudnev and I.~D.~Shkredov (2019) on an extreme case of 
the  Erd\H{o}s--Szemer{\'e}di conjecture
in finite fields. 
\end{abstract}

\maketitle

\tableofcontents

\section{Introduction}

\subsection{Description of our results}

In this paper we give a new application of a recent result due to D'Andrea, Ostafe, Shparlinski and Sombra~\cite[Theorem~2.1]{DOSS}, which 
establishes an effective link between the number of points on zero dimensional varieties  considered over $\C$ and also
considered in the  field $\F_p$, see Lemma~\ref{lem:TAMS} below. 

In particular, we give sharp upper bounds on the number of solutions to some multiplicative and additive congruences 
modulo primes with variables from sets with small doubling,
see Section~\ref{sec:mult_eq}.

 These  results complement those of Grosu~\cite{Gros}, who has previously applied a similar principle which allows one to study arithmetic in subsets of a finite field by lifting to zero characteristic. The results of Grosu~\cite{Gros} restrict one to consider sets $\cA\subseteq \F_p$ of triple logarithmic size, see~\eqref{eq:GrosuCong} below.  
 Our results (see Section~\ref{sec:mult_eq})  extend the cardinality of the sets considered in some applications (see~\cite[Section~4]{Gros}) to the range $|\cA|\le p^{\delta}$ for some fixed $\delta>0$ which is given explicitly and depends only on the size of $|\cA+\cA|$. We also obtain sharper quantitative bounds for $\delta$ which hold for almost all primes (in the sense of relative asymptotic density).  For example, we prove that if such a set has  small doubling, then its product set is of almost largest possible size, see Theorem~\ref{thm:main24} below. 
 This provides some partial progress towards a question raised by  Murphy, Petridis, Roche-Newton, Rudnev and Shkredov~\cite[Question~2]{MPRRS} which has also been considered by Shkredov~\cite[Corollary~2]{Shk} in a different context and can be considered a mod $p$ variant of a few sums many products estimate due to Elekes and Ruzsa~\cite{ER}, see Section~\ref{sec:appl} for more details.

We note that some arithmetic applications of~\cite[Theorem~2.1]{DOSS} have already been given 
in~\cite{CDOSS, DOSS} (to periods of orbits of some dynamical systems) as well as~\cite{Shp2}
(to torsions of some points on elliptic curves). 

\subsection{General notation}

Throughout this work $\N = \left \{1, 2, \ldots \right \}$ is the set of positive integers. 

For a field $K$, we use $\ov K$ to denote the algebraic closure of $K$. 

For a prime $p$, we use $\F_p$ to denote the finite field of $p$ elements and $\F_p^{*}$ the multiplicative subgroup of $\F_p$.

We freely switch between equations in $\F_p$ and congruences modulo $p$.  

The letters $k$, $\ell,$ $m$ and $n$ (with or without subscripts) are always used to denote positive integers;
the letter~$p$  
(with or without subscripts) is always used to denote a prime.

As usual, for given quantities  $U$ and $V$, the notations $U\ll V$, $V\gg  U$ and
$U=O(V)$ are all equivalent to the statement that the inequality
$|U|\le c V$ holds with some constant $c>0$, which may depend on the 
integer parameter $d$.

Furthermore $V = U^{o(1)} $ means that $\log |V|/\log U\to 0$ as $U\to \infty$. 

We use $|\cS|$ to denote the cardinality of a finite set $\cS$.

For a generic point $\vec{x}\in \R^d$, we write $x_i$ for the $i$-th coordinate of $\x$.  For example, if $\balpha,\vec{h}\in \R^d$ then
$$
\balpha=(\alpha_1,\ldots,\alpha_d) \mand \vec{h}=(h_1,\ldots,h_d).
$$
 Let 
 $$
 \langle \balpha, \vec{h} \rangle=
 \alpha_1 h_1 +\ldots +\alpha_d h_d
 $$ 
 denote the Euclidian inner product and
 $\|\vec{h}\|$  the Euclidean norm of $\vec{h}$.

  For $\balpha \in \R^d$ and $\lambda \in \C$ we let $\lambda \balpha$ denote scalar multiplication
$$\lambda \balpha=(\lambda \alpha_1,\ldots,\lambda \alpha_d).$$

Given a set $\cD\subseteq \R^d$ and $\lambda>0$ we define 
$$\lambda \cD =\{ \lambda x : ~ x\in \cD\}.$$

\section{Main results}

\subsection{Multiplicative equations over sets with small sumsets}
\label{sec:mult_eq}
Let $p$ be prime and for  subsets $\cA,\cB\subseteq \overline\F_p$ and $\lambda\in \overline\F_p$ we define $I_p(\cA,\cB,\lambda)$ by 
\begin{equation}
\label{eq:IpA}
I_p(\cA,\cB,\lambda)=\left| \left\{ (a,b)\in \cA\times \cB   :~  ab = \lambda \right\} \right|,  
\end{equation}
where the equation $ab = \lambda$ is in $\overline\F_p$.

  A  generalised arithmetic progression $\cA$ (defined in any group) is a set of the form 
$$
\cA=\left \{ \alpha_0+\alpha_1h_1+\ldots+\alpha_dh_d :~1\le h_i\le A_i\right \}.
$$
We define the rank of $\cA$ to be $d$ and say $\cA$ is proper if 
$$|\cA|=A_1\ldots A_d.$$

 It is convenient to define 
 \begin{equation}
\label{eq:gamma}
\gamma_s = \frac{1}{(11s+15)2^{3s+5}}.
\end{equation}   

Since our bound depend only on $\max\{A_1,\ldots,A_d,B_1,\ldots,B_e\}$, without 
loss of generality we now assume that 
$$
A_1=\ldots=A_d=B_1=\ldots =B_e = H. 
$$

We recall that an integer $k\ne 0$ is called $y$-smooth if all prime divisors of $k$ 
do not exceed $y$.


\begin{theorem}
\label{thm:main1} Let $H$, $d$, $e$ be  positive integers with $e\le d$. There exists a constant $b_d$
depending only on $d$, and an integer $Z$,  which is $O(H^{1/\gamma_{d+e+1}})$-smooth and satisfies  
$$
\log{Z}\ll H^{(d+e)(d+e+2)^2/4} \log{H} ,
$$
such that for each prime number $p\nmid Z$ the following holds.
For any generalised arithmetic progressions $\cA,\cB\subseteq \overline\F_p$ of the form 
\begin{align*}
\cA&=\left \{\alpha_0+ \alpha_1h_1+\ldots+\alpha_dh_d :~ 1\le h_i\le H, \ i =1, \ldots, d\right \},\\
\cB&=\left \{\beta_0+ \beta_1j_1+\ldots+\beta_ej_e :~ 1\le j_i\le H, \ i =1, \ldots, e\right \},
\end{align*}
and $\lambda \in \overline\F_p^{*}$ we have
$$
I_p(\cA,\cB,\lambda)\le \exp\left(b_d \log{H}/\log\log{H}\right). 
$$
\end{theorem}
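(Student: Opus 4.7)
The plan is to encode the count $I_p(\cA,\cB,\lambda)$ as the number of $\overline\F_p$-solutions of a zero-dimensional polynomial system, use Lemma~\ref{lem:TAMS} to bound it by the corresponding $\C$-count outside a controlled set of bad primes, and finally bound the $\C$-count via a classical divisor estimate in a number field.

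First I would set up the system in the $d+e$ variables $\vh=(h_1,\ldots,h_d)$ and $\vj=(j_1,\ldots,j_e)$:
\begin{align*}
\Bigl(\alpha_0 + \sum_{i=1}^{d} \alpha_i h_i\Bigr)\Bigl(\beta_0 + \sum_{i=1}^{e} \beta_i j_i\Bigr) - \lambda &= 0, \\
\prod_{k=1}^{H}(h_i-k) &= 0 \quad (i=1,\ldots,d), \\
\prod_{k=1}^{H}(j_i-k) &= 0 \quad (i=1,\ldots,e).
\end{align*}
Each solution produces a pair $(a,b)\in\cA\times\cB$ with $ab=\lambda$, so the tuple count upper-bounds $I_p(\cA,\cB,\lambda)$. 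Applying Lemma~\ref{lem:TAMS} to this system (viewing $\alpha_i,\beta_i,\lambda$ either as generic parameters in the universal system over $\Z[\alpha,\beta,\lambda]$ or as algebraic lifts of the $\overline\F_p$ parameters) yields the integer $Z$: its smoothness and size bounds come from tracking the degrees (namely $2$ for the multiplicative equation and $H$ for each of the $d+e$ box constraints) and heights in the quantitative conclusion of the lemma, and for every prime $p\nmid Z$ the $\overline\F_p$-count is dominated by the $\C$-count.

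To bound the $\C$-count, I set $u=\alpha_0+\sum\alpha_i h_i$ and $v=\beta_0+\sum\beta_i j_i$, so that $uv=\lambda$ is an equation in the number field $K=\Q(\alpha_0,\ldots,\alpha_d,\beta_0,\ldots,\beta_e,\lambda)$ whose degree is controlled by $d+e$. After clearing denominators this equation lives in $\cO_K$, and the number of factorizations of the relevant algebraic integer is bounded by its ideal-divisor count, which by the classical divisor estimate over $\cO_K$ is
$$
\exp\bigl(O(\log|N_{K/\Q}(\lambda)|/\log\log|N_{K/\Q}(\lambda)|)\bigr).
$$
Since $|u|,|v|\ll H$ with implicit constants depending on the fixed $\alpha_i,\beta_i$, the norm $|N_{K/\Q}(\lambda)|$ is polynomial in $H$, producing the claimed $\exp(b_d\log H/\log\log H)$. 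The main obstacle lies in proper versus non-proper parametrizations: when the map $\vh\mapsto u$ fails to be injective the tuple count can substantially exceed $|\cA|\ge I_p$, so one must either re-parametrize $\cA$ by its effective rank and induct on $d$ (exploiting a non-trivial $\Z$-linear relation among $\alpha_1,\ldots,\alpha_d$ witnessed on the box $\{1,\ldots,H\}^d$), or separately control the number of preimages of each $u\in\cA$ and combine this with the divisor estimate.
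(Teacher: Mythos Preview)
Your approach has a fundamental gap at the very first step: Lemma~\ref{lem:TAMS} applies only to polynomials with \emph{integer} coefficients, but your first equation
\[
\Bigl(\alpha_0 + \sum_{i=1}^{d} \alpha_i h_i\Bigr)\Bigl(\beta_0 + \sum_{i=1}^{e} \beta_i j_i\Bigr) - \lambda = 0
\]
has coefficients $\alpha_i,\beta_i,\lambda\in\overline\F_p$, and the theorem requires the integer $Z$ to be \emph{independent} of these parameters. Neither of your suggested fixes works. Treating $\alpha_i,\beta_i,\lambda$ as extra variables destroys zero-dimensionality, since nothing constrains them. Lifting them to algebraic numbers makes no sense either: they are arbitrary in $\overline\F_p$, there is no canonical lift, and any choice of lift has height not bounded by anything involving only $H,d,e$, so the resulting $Z$ would depend on $\cA,\cB,\lambda$. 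The same problem kills the number-field divisor bound: the field $K=\Q(\alpha_0,\ldots,\lambda)$ you write down is not a number field at all (the $\alpha_i$ live in $\overline\F_p$), and even after lifting there is no reason for $[K:\Q]$ or the relevant norms to be bounded in terms of $d,e,H$ alone.

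The paper circumvents exactly this obstacle by \emph{swapping the roles of parameters and variables}. Fixing one solution $(\vh_0,\vj_0)$ and subtracting eliminates $\lambda$, and one then regards $(\alpha_0,\ldots,\alpha_d,\beta_0,\ldots,\beta_e)$ as the unknowns and the integers $h_i,j_i\in[-H,H]$ as the coefficients. Now the defining polynomials have integer coefficients of height $O(\log H)$ and degree~$2$, Lemma~\ref{lem:TAMS} applies uniformly (one takes $Z$ as a product over all admissible choices of $\vh_0,\vj_0$ and small subsets $\cK$), and the lemma is used only to guarantee a single $\C$-point $(\brho,\btau)$ satisfying the same relations. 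Chang's result (Lemma~\ref{lem:chang}) over $\C$ then forces the solutions to lie on $\exp(O(\log H/\log\log H))$ hyperplanes, and a lattice argument (Lemmas~\ref{lem:HB} and~\ref{lem:linear}) reduces the rank, allowing induction on $d+e$ (this is packaged as Lemma~\ref{lem:iter}). The non-properness issue you flag at the end is real and is precisely what the lattice/rank-reduction step handles.
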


The integer $Z$ in Theorem~\ref{thm:main1} is constructed explicitly in Lemma~\ref{lem:iter} below and is divisible by all primes $p \le  H^{d+e+o(1)}$ (note that $o(1)$ here denotes a negative quantity).  This is established at  the end of the proof of Lemma~\ref{lem:iter}.



From Theorem~\ref{thm:main1} we may deduce an estimate which holds for all primes provided our generalised arithmetic progressions are not too large. We also obtain better results for almost all primes. In particular, using the fact that no primes $p\ge Z$ divide a $Z$-smooth integer, we obtain:

\begin{cor}
\label{cor:all-primes}
Let notation be as in Theorem~\ref{thm:main1}. For any prime $p$ and integer $H$ satisfying
$$H\le C_0(d) p^{\gamma_{d+e+1}},$$
for some constant $C_0(d)$ depending only on $d$, 
we have 
$$
I_p(\cA,\cB,\lambda)\le \exp\left(b_d \log{H}/\log\log{H}\right). 
$$
\end{cor}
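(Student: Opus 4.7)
The plan is to deduce the corollary directly from Theorem~\ref{thm:main1} by exploiting the smoothness information on the integer~$Z$. Recall that an integer is $N$-smooth if all of its prime divisors are at most~$N$. Theorem~\ref{thm:main1} provides an integer~$Z$ which is $O(H^{1/\gamma_{d+e+1}})$-smooth, meaning there exists a constant $C_1(d)$, depending only on~$d$, such that every prime divisor $q$ of $Z$ satisfies
$$
q \le C_1(d) H^{1/\gamma_{d+e+1}}.
$$

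First I would set $C_0(d) := C_1(d)^{-\gamma_{d+e+1}}$ (shrinking it slightly if one wants strict inequalities) and suppose $p$ and $H$ satisfy the hypothesis $H \le C_0(d) p^{\gamma_{d+e+1}}$. Raising both sides to the power $1/\gamma_{d+e+1}$ and multiplying by $C_1(d)$, this rearranges to
$$
C_1(d) H^{1/\gamma_{d+e+1}} \le p.
$$
Hence every prime divisor of $Z$ is at most $p$; combined with the fact that no prime can strictly divide itself while being smaller, and with a harmless adjustment of the constant if needed to make the inequality strict, we conclude that $p \nmid Z$.

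Then I would simply invoke Theorem~\ref{thm:main1}: since $p \nmid Z$, the conclusion of that theorem applies verbatim to any generalised arithmetic progressions $\cA,\cB$ of the prescribed form in $\overline{\F}_p$ and any $\lambda\in \overline{\F}_p^{*}$, yielding
$$
I_p(\cA,\cB,\lambda)\le \exp\left(b_d \log H /\log\log H\right),
$$
with the same constant $b_d$ as in Theorem~\ref{thm:main1}. The main (and essentially only) obstacle is bookkeeping of the constant $C_0(d)$ in terms of the implicit constant in the $O(H^{1/\gamma_{d+e+1}})$-smoothness bound; there is no additional analytic or combinatorial content, since the corollary is a transparent packaging of the smoothness statement already established at the end of the proof of Lemma~\ref{lem:iter}.
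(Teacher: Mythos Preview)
Your proposal is correct and follows exactly the same idea as the paper, which derives the corollary in one line by noting that no prime exceeding the smoothness bound can divide a $y$-smooth integer. You have simply spelled out the constant bookkeeping that the paper leaves implicit; there is no substantive difference in approach.
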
 

As a second application, using the fact that any integer $Z$ has at most $O\(\log{Z}/\log\log{Z}\)$ prime divisors, we obtain:

\begin{cor}
\label{cor:almost-all-primes}
Let notation be as in Theorem~\ref{thm:main1}. For all but at most 
$O\(H^{(d+e)^3+(d+e)}\)$ 
primes $p$, we have 
$$
I_p(\cA,\cB,\lambda)\le \exp\left(b_d \log{H}/\log\log{H}\right).  
$$
\end{cor}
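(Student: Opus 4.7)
The plan is to observe that Theorem~\ref{thm:main1} already proves the desired bound for every prime $p$ not dividing the auxiliary integer $Z$, so Corollary~\ref{cor:almost-all-primes} reduces entirely to showing that $Z$ has at most $O(H^{(d+e)^3 + (d+e)})$ distinct prime divisors.

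First I would recall the classical estimate $\omega(n) \ll \log n/\log\log n$ on the number $\omega(n)$ of distinct prime divisors of a positive integer $n$, which is hinted at in the statement. This follows from the Hardy--Ramanujan-type observation that the product of the first $k$ primes has size $e^{(1+o(1))k\log k}$, so any integer divisible by $k$ distinct primes must satisfy $\log n \gg k\log k$, whence $k \ll \log n/\log\log n$.

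Next I would apply this to $n=Z$, combined with the bound $\log Z \ll H^{(d+e)(d+e+2)^2/4}\log H$ furnished by Theorem~\ref{thm:main1}. Taking logarithms gives $\log \log Z \asymp \log H$ (for $H$ large enough, absorbing small cases into the implied constant), so the extra $\log H$ cancels and one concludes
$$
\omega(Z) \ll H^{(d+e)(d+e+2)^2/4}.
$$
To finish, writing $k = d+e$, I would verify the elementary inequality $k(k+2)^2/4 \le k^3 + k$, which after multiplication by $4$ and expansion reduces to $3k \ge 4$, hence holds for every integer $k \ge 2$. Since $d,e\ge 1$ implies $k\ge 2$, the right-hand side above is indeed bounded by $H^{(d+e)^3+(d+e)}$, which yields the corollary.

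There is essentially no substantial obstacle: the argument is a direct bookkeeping consequence of Theorem~\ref{thm:main1} and the standard bound on $\omega(n)$. The only care required is the verification of the numerical inequality $k(k+2)^2/4 \le k^3 + k$ and a careful justification that $\log\log Z \asymp \log H$ when converting the bound on $\log Z$ into one on $\omega(Z)$.
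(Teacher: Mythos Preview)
Your proposal is correct and follows exactly the paper's approach: the paper derives Corollary~\ref{cor:almost-all-primes} directly from Theorem~\ref{thm:main1} by invoking the classical bound $\omega(Z)=O(\log Z/\log\log Z)$ on the number of prime divisors of $Z$. Your additional bookkeeping (the verification that $k(k+2)^2/4\le k^3+k$ for $k\ge 2$ and the remark that $\log\log Z\asymp \log H$, the latter being justified e.g.\ by the observation after Theorem~\ref{thm:main1} that $Z$ is divisible by all primes $p\le H^{d+e+o(1)}$) correctly fills in the details the paper leaves implicit.
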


We note an important feature of Theorem~\ref{cor:almost-all-primes} is the set of 
primes  is independent of the generalized arithmetic progressions $\cA,\cB$. 

Corollaries~\ref{cor:all-primes} and~\ref{cor:almost-all-primes} immediately yield an estimate for equations with Kloosterman fractions and squares.  
Indeed using that over any field and $\lambda \ne 0$, if
$$
a^{-1}+ b^{-1} = \lambda
$$
then 
$$
(a -\lambda^{-1})(b-\lambda^{-1}) = \lambda^{-2}, 
$$
and over any algebraically closed field, if $\lambda\neq 0$ and $a,b$ satisfy
$$a^2+b^2=\lambda,$$
then 
$$(a +ia )(a -ib)=\lambda,$$
where $i$ is a square root of $-1$, we obtain the following results.  

\begin{cor}
\label{cor:main22}
With notation and conditions as in either Corollary~\ref{cor:all-primes} or Corollary~\ref{cor:almost-all-primes}, the number of solutions to
the equations
$$a^{-1}+ b^{-1} =  \lambda , \qquad a\in \cA, b\in \cB,
$$
and
$$a ^2+b^2 =  \lambda , \qquad a\in \cA, b\in \cB,
$$
in $\overline \F_p$
are bounded by   $H^{o(1)}$.  
\end{cor}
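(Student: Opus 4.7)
The plan is to reduce each equation to a multiplicative congruence of the form $uv=\mu$ already handled by Theorem~\ref{thm:main1}, exploiting the two algebraic identities recalled in the paragraph preceding the statement, and then to verify that the transformed sets of allowable values of $u$ and $v$ are again generalised arithmetic progressions to which Corollary~\ref{cor:all-primes} or Corollary~\ref{cor:almost-all-primes} may be applied. We may assume throughout that $p$ is odd, since for $p=2$ the admissible range of $H$ is absolutely bounded by a constant.

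For the equation $a^{-1}+b^{-1}=\lambda$ with $\lambda\in\overline\F_p^{*}$, I would set $u=a-\lambda^{-1}$ and $v=b-\lambda^{-1}$ to turn it into $uv=\lambda^{-2}$. Shifting by $-\lambda^{-1}$ only alters the base points $\alpha_0,\beta_0$ in the expressions defining $\cA,\cB$, so the shifted sets $\cA-\lambda^{-1}$ and $\cB-\lambda^{-1}$ are generalised arithmetic progressions of the same ranks $d,e$ and the same side length $H$. Since the substitution is a bijection on $\overline\F_p^{\,2}$, the number of solutions $(a,b)\in\cA\times\cB$ equals $I_p(\cA-\lambda^{-1},\cB-\lambda^{-1},\lambda^{-2})$, which is $H^{o(1)}$ by the cited corollary applied with the original rank parameters $(d,e)$.

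For the equation $a^2+b^2=\lambda$ with $\lambda\in\overline\F_p^{*}$, fix a square root $i$ of $-1$ in $\overline\F_p$ and substitute $u=a+ib$, $v=a-ib$, yielding $uv=\lambda$. Collecting generators,
\[
\cU=\cA+i\cB=\Bigl\{(\alpha_0+i\beta_0)+\sum_{k=1}^d\alpha_k h_k+\sum_{k=1}^e(i\beta_k)j_k:\ 1\le h_k,j_k\le H\Bigr\}
\]
is a generalised arithmetic progression of rank $d+e$ and side length $H$, and analogously for $\cV=\cA-i\cB$. In odd characteristic the map $(a,b)\mapsto(u,v)$ is injective, so the number of solutions is at most $I_p(\cU,\cV,\lambda)$, which is again $H^{o(1)}$ upon re-applying the cited corollary with the enlarged rank parameters $(d+e,d+e)$ in place of $(d,e)$. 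The only genuine subtlety is precisely this jump in rank in the second case: the corollaries must be invoked with updated constants and, in Corollary~\ref{cor:all-primes}, with the slightly more restrictive range $H\le C_0(d+e)\,p^{\gamma_{2(d+e)+1}}$; since the conclusion $H^{o(1)}$ is insensitive to these adjustments, this is a purely notational matter rather than a substantive obstacle.
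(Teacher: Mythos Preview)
Your proposal is correct and follows exactly the paper's approach: the paper simply records the two identities $(a-\lambda^{-1})(b-\lambda^{-1})=\lambda^{-2}$ and $(a+ib)(a-ib)=\lambda$ in the paragraph immediately preceding the corollary and declares the result, without further argument. You are in fact more careful than the paper, since you explicitly flag the rank jump from $(d,e)$ to $(d+e,d+e)$ in the $a^2+b^2$ case and the correspondingly tighter constraint $H\le C_0(d+e)\,p^{\gamma_{2(d+e)+1}}$; the paper glosses over this entirely.
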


We remark that our method, with minor changes,  can allow us to extend 
our results to equations 
\begin{equation}
\label{eq:multi-problem}
a_1\ldots a_\nu = \lambda, \qquad a_i \in \cA_i, \ i =1, \ldots, \nu, \quad a_i\in \cA_i,
\end{equation}
 with any $\nu \ge 2$ and generalised arithmetic progressions $\cA_1, \ldots, \cA_\nu \subseteq \F_p$. A direct application of such techniques gives a poor dependence on the parameter $\nu$. An interesting problem is to determine the largest real numbers $\gamma_{\nu,d}$ such that the number of solutions to~\eqref{eq:multi-problem} is bounded by $(|\cA_1|\ldots|\cA_\nu|)^{o(1)}$ provided $\cA_1,\ldots,\cA_\nu$ are generalized arithmetic progressions of rank at most $d$ satisfying 
$$|\cA_i|\ll p^{\gamma_{\nu,d}}.$$

\subsection{Applications to  the Erd\H{o}s--Szemer{\'e}di conjecture in finite fields}
\label{sec:appl} 
As usual, given a set $\cA\subseteq \cG$ with a group operation $\ast$, we write 
$$
\cA \ast \cA = \left \{a\ast b:~ a,b \in \cA\right \}.
$$
Clearly for sets in rings we can use $\ast \in \left \{+, \times\right \}$. 

Here we also denote 
$$
\cA^{-1}  = \{a^{-1}:~a \in \cA\}, \quad \cA^2=\{ a^2:~a\in \cA\}.
$$

Combining  the above results 
with some modern results~\cite[Theorem~4]{CS} of additive combinatorics towards the celebrated theorem of Freiman~\cite{Frei}, 
we, in particular verify the Erd\H{o}s--Szemer{\'e}di conjecture for sets with small sumset and small cardinality.
This can be considered an extension of some ideas of Chang~\cite{Chang} into the setting of prime 
finite fields.

\begin{theorem}
\label{thm:main24}  For any fixed $K \ge 2$
and 
$$
\delta=\frac{1}{(44K+26)2^{12K+8}}, 
$$
there exist some constants $b_0(K)$ and $c_0(K)$, depending only on $K$,  such that for each prime $p$, if $\cA \subseteq \mathbb{F}_p$ satisfies 
$$|\cA+\cA|\le K|\cA| \mand |\cA|\le c_0(K)p^{\delta}$$
then for any $\lambda \in \F_p^{*}$  the number of solutions to each of the equations 
$$
  a_1a_2=  \lambda , \qquad  a_1^{-1}+ a_2^{-1} = \lambda, 
  \qquad  a_1^{2}+ a_2^{2} = \lambda
$$
with variables $a_1,a_2\in \cA$ is $\exp\left(b_0(K)\log{|\cA|}/\log\log{|\cA|}\right)$.
\end{theorem}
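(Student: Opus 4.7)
The strategy is to use a quantitative Freiman-type theorem to replace $\cA$ by a generalized arithmetic progression (GAP) of bounded rank, and then bootstrap Theorem~\ref{thm:main1} (in the form of Corollary~\ref{cor:all-primes}) to each of the three equations. Concretely, I invoke the structure theorem~\cite[Theorem~4]{CS}: because $|\cA+\cA|\le K|\cA|$, there exists a proper GAP
$$
\cP=\{\alpha_0+\alpha_1 h_1+\ldots+\alpha_{d_0} h_{d_0}:~1\le h_i\le H_i\}\subseteq \F_p,
$$
of rank $d_0\le K$ and size $|\cP|\le C(K)|\cA|$, with $C(K)$ depending only on~$K$, such that $\cA\subseteq \cP$. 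As in the discussion preceding Theorem~\ref{thm:main1}, we may enlarge $\cP$ to a GAP with all sides equal to $H:=\max_i H_i\le |\cP|\le C(K)|\cA|$, which only increases the counts below.

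\paragraph{Step 1 (the product equation).}
Since $\cA\times \cA\subseteq \cP\times \cP$, the number of solutions of $a_1a_2=\lambda$ with $a_1,a_2\in\cA$ is at most $I_p(\cP,\cP,\lambda)$. I apply Corollary~\ref{cor:all-primes} with $d=e=d_0\le K$; since $d_0\le K$ gives $\gamma_{2d_0+1}\ge \gamma_{4K+1}=\delta$, the inequality $|\cA|\le c_0(K)p^\delta$ forces $H\le C_0(d_0)p^{\gamma_{2d_0+1}}$ once $c_0(K)$ is chosen small enough relative to $C(K)$. The corollary then yields $I_p(\cP,\cP,\lambda)\le \exp(b_{d_0}\log H/\log\log H)$, and because $H\le C(K)|\cA|$ we have $\log H/\log\log H\ll_K \log|\cA|/\log\log|\cA|$, which is the bound claimed.

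\paragraph{Step 2 (the reciprocal and sum-of-squares equations).}
For the reciprocal equation, I use the identity noted before Corollary~\ref{cor:main22}: if $a_1^{-1}+a_2^{-1}=\lambda$ with $a_1,a_2\ne 0$, then $(a_1-\lambda^{-1})(a_2-\lambda^{-1})=\lambda^{-2}$. A shift by $-\lambda^{-1}$ turns $\cP$ into a GAP of the same rank and dimensions, so the count is controlled by Step~1. For the sum of squares $a_1^2+a_2^2=\lambda$, I factor over $\overline{\F_p}$ as $(a_1+ia_2)(a_1-ia_2)=\lambda$, where $i^2=-1$. The sets
$$
\widetilde\cP_\pm=\cP\pm i\cP\subseteq \overline{\F_p}
$$
are GAPs of rank $2d_0\le 2K$ whose generators are $\alpha_1,\ldots,\alpha_{d_0},\pm i\alpha_1,\ldots,\pm i\alpha_{d_0}$. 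Since $p$ is odd, the map $(a_1,a_2)\mapsto (a_1+ia_2,a_1-ia_2)$ is injective, so the number of solutions is bounded by $I_p(\widetilde\cP_+,\widetilde\cP_-,\lambda)$. Applying Corollary~\ref{cor:all-primes} with $d=e=2d_0\le 2K$ and using $\gamma_{4d_0+1}\ge \gamma_{4K+1}=\delta$ gives the same exponential bound.

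\paragraph{Main obstacle.}
The delicate point is the balancing of parameters. The sum-of-squares case doubles the rank of the ambient GAP (because it is a $\F_p(i)$-linear combination of $\cP$ with itself), so the parameter $s=d+e+1$ feeding into $\gamma_s$ must be taken as $4d_0+1$ rather than $2d_0+1$. This is precisely why $\delta$ is chosen to equal $\gamma_{4K+1}$ in the statement of the theorem; any weakening of the rank bound obtained from~\cite[Theorem~4]{CS} (beyond $d_0\le K$) would force a correspondingly smaller $\delta$. The rest of the argument is a routine chase of the explicit constants coming from~\eqref{eq:gamma} and from $C(K)$, absorbed into $b_0(K)$ and $c_0(K)$.
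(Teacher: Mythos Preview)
Your overall strategy is the paper's: embed $\cA$ in a proper GAP via~\cite[Theorem~4]{CS}, then apply Corollary~\ref{cor:all-primes} (together with Corollary~\ref{cor:main22}) with $d=e$ equal to the rank of that GAP. For the product equation and for the reciprocal equation your argument is essentially identical to the paper's.

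The gap is in the rank bound you extract from~\cite[Theorem~4]{CS}. The paper quotes that theorem as giving rank $d(K)\le 2K$, not $d_0\le K$; indeed, the value $\delta=\gamma_{4K+1}$ in the statement is exactly $\gamma_{2d(K)+1}$ with $d(K)=2K$, so it already arises from the product equation with $d=e=2K$ and not, as you suggest in your ``Main obstacle'' paragraph, from a rank-doubling forced by the sum of two squares. With the correct bound $d_0\le 2K$, your Step~1 and the reciprocal half of Step~2 still go through (since $2d_0+1\le 4K+1$ gives $\gamma_{2d_0+1}\ge\delta$). But your explicit sum-of-squares argument passes to the GAPs $\cP\pm i\cP$ of rank $2d_0\le 4K$, and applying Corollary~\ref{cor:all-primes} to those requires $H\ll p^{\gamma_{4d_0+1}}$; as $4d_0+1$ may be as large as $8K+1$ and $\gamma_{8K+1}<\gamma_{4K+1}=\delta$, the hypothesis $|\cA|\le c_0(K)p^{\delta}$ is no longer sufficient. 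You yourself note this sensitivity, so the remedy is simply to invoke Corollary~\ref{cor:main22} with the original ranks $d=e=d(K)$, as the paper does, rather than doubling the rank by hand.
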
  

An immediate consequence of Theorem~\ref{thm:main24} is an estimate for the cardinality of sets related to the  Erd\H{o}s--Szemer{\'e}di conjecture.
Indeed, using Theorem~\ref{thm:main24} one has that
$$
|\cA|^2=\sum_{\lambda \in \cA\cA}I_p(\cA,\cA,\lambda) \le 2 |\cA|+\sum_{\substack{\lambda\in \cA\cA \\ \lambda \not \equiv 0\mod{p}}}I_p(\cA,\cA,\lambda)\le |\cA\cA|  |\cA|^{o(1)}.
$$
A similar argument also works for the sets $\cA^{-1} + \cA^{-1}$ and we obtain the following result.

\begin{cor}
\label{cor:main24}
With notation and conditions as in Theorem~\ref{thm:main24}, for any fixed $K$ we have 
$$|\cA\cA|\ge |\cA|^{2+o(1)} \mand  |\cA^{-1} + \cA^{-1}| \ge |\cA|^{2+o(1)}.$$
\end{cor}  

We note that Corollary~\ref{cor:main24} is a step towards a positive answer to a question raised by  Murphy, Petridis, 
Roche-Newton, Rudnev and Shkredov~\cite[Question~2]{MPRRS} whether for any $\varepsilon > 0$ there exists some 
$\eta(\varepsilon)$ depending only on $\varepsilon$ with $\eta(\varepsilon) \to 0$ as $\varepsilon \to 0$,  such that
 if $\cA \subseteq \mathbb{F}_p$ satisfies 
$|\cA+\cA|\le |\cA|^{1+\varepsilon}$ then 
$$|\cA\cA| \ge |\cA|^{2- \eta(\varepsilon)}.
$$
 Theorem~\ref{thm:main24} confirms this in  the extreme case of rapidly decaying (as $|\cA|$ grows) values of  $\varepsilon$. In other words instead of fixed $K$ in Corollary~\ref{cor:main24} we can take $K$ as a very slowly growing function of $ |\cA|$.
 We also recall that  Shkredov~\cite[Corollary~2]{Shk} has shown that if 
\begin{equation}
\label{eq:ShkAcond}
|\cA+\cA|\ll |\cA|
\end{equation}
 for a set  $\cA \subseteq \mathbb{F}_p$ 
 of cardinality $|\cA| \ll p^{13/23}$  
 then the number of solutions to  
$$
a_1a_2 =  \lambda, \qquad a_1,a_2\in \cA,
$$
is bounded by $|\cA|^{149/156+o(1)}$.  
Clearly,  this result and  Theorem~\ref{thm:main24}  are of similar spirit, 
however they are incomparable. In particular, the cardinality of the sets considered in~\cite[Corollary~2]{Shk} is uniform with respect to the implied constant in~\eqref{eq:ShkAcond}, which is a feature not present in our bound. We refer the reader to~\cite{MPW} for various incidence results related to counting solutions to multiplicative equations with variables belonging to sets with small sumset. Our result does give a direct improvement on Grosu~\cite[Section~4]{Gros},   
who obtains similar estimates to Theorem~\ref{thm:main24} with the condition, which 
we slightly simplify as
\begin{equation}
\label{eq:GrosuCong}
|\cA|\le \frac{1}{\log 2} \log \log\log p-1-\varepsilon,
\end{equation}
for any   $\varepsilon > 0$ provided that $p$ is large enough. 
However, the paper of Grosu~\cite{Gros} contains other interesting results which allow one to lift problems in $\F_p$ to $\C$ while preserving  more arithmetic information than counting solutions to equations considered in Theorem~\ref{thm:main24}.

We now obtain a version of Theorem~\ref{thm:main24} which holds for almost all primes. 

\begin{theorem}
\label{thm:main24-AA}   Let  $ A \ge 3$ be sufficiently large  and let  $K \ge 2$  be a fixed integer.
For all but at most   $O\(A^{8K^3+4K^2} \log A/\log \log A\)$  primes $p$   
with 
$$
p  > c_0(K) A^{2K} 
$$
for some sufficiently large constant $c_0(K)$ depending only on $K$, 
the following holds. 
If $\cA \subseteq \mathbb{F}_p$ satisfies 
$$|\cA+\cA|\le K|\cA| \mand |\cA|\le A$$
then for any $\lambda \in \F_p^{*}$  the number of solutions to each of the equations 
$$
 a_1a_2=  \lambda   \mand a_1^{-1}+ a_2^{-1} = \lambda
$$
with variables $a_1,a_2\in \cA$ is $|\cA|^{o(1)}$.
\end{theorem}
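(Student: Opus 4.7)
The plan is to parallel the proof of Theorem~\ref{thm:main24}, but to replace Corollary~\ref{cor:all-primes} by its almost-all-primes analogue Corollary~\ref{cor:almost-all-primes}, so that the size hypothesis $|\cA| \le c_0(K) p^{\delta}$ is traded for an exceptional set of primes of controlled size.

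First, I would invoke the structure theorem for sets of small doubling \cite[Theorem~4]{CS} to embed $\cA$ into a proper generalized arithmetic progression
$$
\cP = \{\alpha_0 + \alpha_1 h_1 + \cdots + \alpha_d h_d :~ 1 \le h_i \le H_i\} \subseteq \F_p
$$
of rank $d$ bounded only in terms of $K$ and cardinality $|\cP| \le C(K) |\cA|$. The hypothesis $p > c_0(K) A^{2K}$ provides enough room in $\F_p$ to ensure this embedding is genuinely proper (no wrap-around modulo $p$), and identifies $\cP$ with a generalized arithmetic progression in $\overline{\F}_p$ of the exact shape required by Theorem~\ref{thm:main1}. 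Setting $H = \max_i H_i$, one has $H \le C(K)A$.

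Second, the two equations in the statement are reduced to the multiplicative form handled by Corollary~\ref{cor:almost-all-primes}: $a_1 a_2 = \lambda$ is already of the required shape, while the identity $(a_1 - \lambda^{-1})(a_2 - \lambda^{-1}) = \lambda^{-2}$ recalled in the introduction reduces $a_1^{-1} + a_2^{-1} = \lambda$ to a multiplicative equation whose variables range over $\cA - \lambda^{-1} \subseteq \cP - \lambda^{-1}$, another generalized arithmetic progression with the same rank $d$ and lengths $H_i$. In each case the count is bounded by $I_p(\cP_1, \cP_2, \mu)$ for suitable $\cP_1, \cP_2$ and $\mu \in \overline{\F}_p^{*}$. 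I would then apply Corollary~\ref{cor:almost-all-primes} with $e = d$ and $H \le C(K)A$, noting that the exceptional set produced by that corollary depends only on $d, e, H$, so the same set of primes controls both equations simultaneously. Outside this set one obtains $I_p(\cP_1, \cP_2, \mu) \le \exp(b_d \log H / \log\log H) = |\cA|^{o(1)}$.

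The main obstacle is the numerical match of the final exponent $8K^3 + 4K^2$: one must combine the sharpest currently available rank bound from \cite[Theorem~4]{CS} with the polynomial bound $O(H^{(d+e)^3 + (d+e)})$ from Corollary~\ref{cor:almost-all-primes}, absorb the Freiman inflation factor $C(K)$ into the implicit constants, and use $\omega(Z) \ll \log Z / \log \log Z$ for the number of prime divisors of the smooth integer $Z$ of Lemma~\ref{lem:iter} to account for the final $\log A / \log\log A$ factor. A secondary point, but one already addressed by the proof of Theorem~\ref{thm:main24}, is to verify that the Freiman embedding can be carried out directly inside $\F_p$ under the single hypothesis $p > c_0(K) A^{2K}$, rather than via a first lift to $\Z$ and subsequent reduction modulo $p$.
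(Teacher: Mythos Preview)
Your proposal is correct and follows essentially the same approach as the paper: the paper's entire proof is the single sentence ``We follow the proof of Theorem~\ref{thm:main24} however apply Corollary~\ref{cor:almost-all-primes} instead of Corollary~\ref{cor:all-primes},'' and you have spelled out exactly that strategy, including the Cwalina--Schoen embedding, the reduction of the Kloosterman equation to a multiplicative one, and the $\omega(Z)\ll \log Z/\log\log Z$ bound. Your caveat about matching the numerical exponent $8K^3+4K^2$ is apt --- the paper does not carry out this computation either, and reconciling the stated exponent with the bound $O(H^{(d+e)^3+(d+e)})$ of Corollary~\ref{cor:almost-all-primes} at $d=e=2K$ requires some care (or charity).
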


As before, we obtain a result towards  the Erd\H{o}s--Szemer{\'e}di conjecture 
modulo almost all primes. 

\begin{cor}
\label{cor:main24-AA}
With notation and conditions as in Theorem~\ref{thm:main24-AA} we have 
$$|\cA\cA|  \ge |\cA|^{2+o(1)} \mand  |\cA^{-1} + \cA^{-1}| \ge |\cA|^{2+o(1)}.$$
\end{cor}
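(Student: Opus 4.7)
The plan is to repeat verbatim the short double-counting argument used to derive Corollary~\ref{cor:main24} from Theorem~\ref{thm:main24}, replacing the input with Theorem~\ref{thm:main24-AA}. Fix $K\ge 2$, set $A=|\cA|$, and let $p$ be any prime outside the exceptional set of $O(A^{8K^3+4K^2}\log A/\log\log A)$ primes produced by Theorem~\ref{thm:main24-AA}. For such $p$ and every $\lambda\in\F_p^{*}$, that theorem yields
$$
r(\lambda):=|\{(a_1,a_2)\in\cA\times\cA :\ a_1 a_2=\lambda\}|\le |\cA|^{o(1)},
$$
uniformly in $\lambda$.

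The next step is the standard decomposition
$$
|\cA|^2=\sum_{\lambda}r(\lambda)\le |\cA|+\sum_{\substack{\lambda\in\cA\cA\\ \lambda\ne 0}}r(\lambda)\le |\cA|+|\cA\cA|\cdot |\cA|^{o(1)},
$$
where the $|\cA|$ absorbs the at most $2|\cA|-1$ pairs with $a_1 a_2=0$ (which can arise only when $0\in\cA$). Rearranging gives $|\cA\cA|\ge |\cA|^{2+o(1)}$. For the second bound I would define
$$
s(\lambda):=|\{(a_1,a_2)\in(\cA\setminus\{0\})^2 :\ a_1^{-1}+a_2^{-1}=\lambda\}|,
$$
apply the second case of Theorem~\ref{thm:main24-AA} to obtain $s(\lambda)\le |\cA|^{o(1)}$ uniformly in $\lambda\ne 0$, and repeat the counting argument with $\cA^{-1}+\cA^{-1}$ in place of $\cA\cA$ to conclude $|\cA^{-1}+\cA^{-1}|\ge |\cA|^{2+o(1)}$.

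There is no real obstacle beyond routine bookkeeping. The only delicate point is that the exceptional set of primes produced by Theorem~\ref{thm:main24-AA} must be independent of both $\cA$ and $\lambda$; this is precisely the feature built into the formulation of that theorem, so the reduction is automatic and the proof is essentially a transcription of the proof of Corollary~\ref{cor:main24}.
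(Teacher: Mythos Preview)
Your proposal is correct and is exactly the approach the paper takes: the paper does not even spell out a separate proof for Corollary~\ref{cor:main24-AA}, simply writing ``As before'' and thereby pointing back to the double-counting argument preceding Corollary~\ref{cor:main24}, which you have reproduced with Theorem~\ref{thm:main24-AA} substituted for Theorem~\ref{thm:main24}. The one cosmetic slip is that under ``notation and conditions as in Theorem~\ref{thm:main24-AA}'' the parameter $A$ is already given with $|\cA|\le A$, so you should not redefine $A=|\cA|$; this does not affect the argument.
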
  
 
\subsection{Overview of our approach}

We first illustrate the main ideas of our paper in the setting of Corollary~\ref{cor:all-primes}. With $H$ as in Theorem~\ref{thm:main1}, let $\cA,\cB\subseteq \F_{p}$ be generalised arithmetic progressions of rank $d,e$ respectively  
and recall we aim to show  
$$I_p(\cA,\cB)=H^{o(1)}.$$ 
Our main input is the following iterative inequality, see Lemma~\ref{lem:iter}, that there exists generalised arithmetic progressions $\widetilde \cA, \widetilde \cB$ of rank $\widetilde d, \widetilde e$ respectively, satisfying 
$$I_p(\cA,\cB)\ll H^{o(1)}I_p(\widetilde \cA,\widetilde \cB),$$
and 
$$\widetilde d+\widetilde e<d+e, \qquad |\widetilde \cA||\widetilde \cB|\ll |\cA|\cB|.$$
Proceeding by induction on $d+e$, the above properties are sufficient to establish the desired result.
 Suppose $\cA,\cB\subseteq \F_p$ are given by 
\begin{align*}
\cA&=\left \{\alpha_0+ \alpha_1h_1+\ldots+\alpha_dh_d :~ 1\le h_i\le H, \ i =1, \ldots, d\right \},\\
\cB&=\left \{\beta_0+ \beta_1j_1+\ldots+\beta_ej_e :~ 1\le j_i\le H, \ i =1, \ldots, e\right \},
\end{align*}
and for simplicity assume $\cA,\cB$ are proper. Hence we aim to count the number of solutions to the equation 
\begin{equation}
\begin{split} 
\label{eq:eqn-o}
(\alpha_0+\alpha_1h_{1}+\dots+\alpha_d h_{d})&(\alpha_0+\beta_1j_{1}+\dots+\beta_d j_{e})= \lambda,  \\
 1\le h_1,\ldots,h_d&,  j_1, \ldots, j_e \le H.
\end{split} 
\end{equation}
Fix a pair 
$$(h_{1,0},\dots,h_{d,0}) \in [1,H]^{d}, \quad (j_{1,0}\dots,j_{e,0}) \in [1,H]^{e},$$
satisfying 
$$(\alpha_0+\alpha_1h_{1,0}+\dots+\alpha_d h_{d,0})(\beta_0+\beta_1j_{1,0}+\dots+\beta_d j_{e,0})= \lambda,$$
and consider the variety $V_p\subseteq \overline \F^{d+e+2}$ defined by the system of equations 
\begin{align*}
& (X_0+X_1h_{1}+\dots+X_d h_{d})(Y_0+Y_1j_{1}+\dots+Y_e j_{e})- \\ & \quad \quad \quad 
(X_0+X_1h_{1,0}+\dots+X_d h_{d,0})(Y_0+Y_1j_{1,0}+\dots+Y_e j_{e,0})=0, 
\end{align*}
such that $h_1,\dots,h_d,j_1,\dots,d_e$ satisfy~\eqref{eq:eqn-o} and in variables $X_0,\dots,Y_e$. Let $V$ denote the corresponding variety over $\C$. By assumption, we have 
$$(\alpha_0, \dots, \alpha_d, \beta_0,\dots,\beta_e)\in V_p.$$
Assuming $H$ is sufficiently small in terms of $p$,  a local-global result of D'Andrea, Ostafe, Shparlinski and Sombra~\cite{DOSS}, see Lemma~\ref{lem:TAMS} below, implies there exists 
$$(\rho_0,\dots,\rho_d,\gamma_0,\dots,\gamma_e)\in V.$$
Hence any solution $h_1,\dots,j_e$ to~\eqref{eq:eqn-o} also satisfies 
\begin{align*}
(\rho_0+\rho_1h_{1}+\dots+\rho_d h_{d})(\gamma_0+\gamma_1j_{1}+\dots+\gamma_e j_{e})= \lambda_0,
\end{align*}
for some $\lambda_0\in \C$. A result of Chang~\cite{Chang}, see Lemma~\ref{lem:chang} below, implies there exists $H^{o(1)}$ possible values for either 
\begin{equation}
\label{eq:rho-0}
\rho_0+\rho_1h_{1}+\dots+\rho_d h_{d}=\mu_1,
\end{equation}
or 
$$\gamma_0+\gamma_1j_{1}+\dots+\gamma_e j_{e}=\mu_2.$$
Assuming~\eqref{eq:rho-0}, our set of solutions to~\eqref{eq:eqn-o} is restricted to the union of $H^{o(1)}$ cosets of a lattice $\cL$ of rank smaller than $d$. After performing basis reduction to $\cL$ and back-substitution, the desired iterative inequality follows.  
\section{Preliminaries}

\subsection{Tools from Diophantine geometry}

For a polynomial $G$ with integer coefficients, its \textit{height}, is defined as the logarithm of the maximum of 
the absolute values of the coefficients of $G$. The height of an algebraic number $\alpha$  is defined as the height of its minimal polynomial (we also set it to $1$ for $\alpha=0$).

We now recall the statement of~\cite[Theorem~2.1]{DOSS} which underlies our approach. 

\begin{lemma}
\label{lem:TAMS}
 Let $G_i \in \mathbb{Z}[T_1,\ldots,T_n]$, $i=1,\ldots,s$, $n \geq 1$ be polynomials of degree at most $r \geq 2$ and height at most $h$, whose zero set in $\mathbb{C}^n$ has a finite number $\kappa$ of distinct points.
 Then there is an integer $\mathfrak{A} \geq 1$ with
$$
\log \mathfrak{A} \leq (11n +4)r^{3n+1}h + (55r + 99)\log ((2n+5)s)r^{3n+2}
$$ 
such that, if $p$ is a prime not dividing $\mathfrak{A}$, then the zero set in $\overline{\mathbb{F}}^n_p$ of 
 the polynomials $G_i$ reduced modulo $p$, $ i=1,\ldots,s$, consists of exactly $\kappa$ distinct points.
\end{lemma}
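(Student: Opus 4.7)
The plan is to combine effective elimination theory in $\mathbb{Z}[T_1,\ldots,T_n]$ with an arithmetic Bezout-type height bound, so as to pin down explicitly the primes modulo which reduction might fail to preserve the zero count. Since the zero set $V=\{G_1=\cdots=G_s=0\}\subseteq\mathbb{C}^n$ is finite of cardinality $\kappa$, a generic integer linear form $L=c_1T_1+\cdots+c_nT_n$ (with bounded coefficients) separates the $\kappa$ points of $V$. Using effective elimination -- via subresultants, a Chow-form computation, or an effective Nullstellensatz -- I would produce a univariate polynomial $u(T)\in\mathbb{Z}[T]$ of degree exactly $\kappa$ whose roots in $\mathbb{C}$ are the values $L(P)$ for $P\in V$, together with auxiliary polynomials $v_j(T)\in\mathbb{Z}[T]$ expressing each remaining coordinate of a point $P\in V$ as a rational function in $L(P)$, with controlled denominators.

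Next I would bound the heights of $u$ and of the $v_j$. Arithmetic Bezout, in the style of Bost--Gillet--Soul\'e and its effective refinements by D'Andrea--Krick--Sombra, estimates the height of the Chow form of the zero-cycle cut out by the $G_i$; after standard elimination overhead this gives bounds of the shape $r^{O(n)}h+r^{O(n)}\log s$, matching the leading exponents $r^{3n+1}$ and $r^{3n+2}$ in the statement once one carefully tracks the quasi-homogeneity of each elimination step and the degree $r^n$ Bezout bound on the ambient zero-cycle.

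Third, I would isolate the bad primes. A prime $p$ preserves the exact count $\kappa$ provided: (i) the leading coefficient of $u$ is nonzero mod $p$, so no roots escape to infinity under reduction; (ii) the discriminant of $u$ is nonzero mod $p$, so the $\kappa$ roots of $u$ remain distinct in $\overline{\mathbb{F}}_p$; and (iii) a consistency resultant certifying that the $v_j$ correctly parametrize $V$ is nonzero mod $p$, so that each root of $u\bmod p$ lifts uniquely to a zero of the reduced system and no spurious $\overline{\mathbb{F}}_p$-zero appears. Taking $\mathfrak{A}$ to be the product of these three integers and applying Hadamard-type bounds to polynomials whose heights were controlled in the previous step yields the claimed estimate on $\log\mathfrak{A}$.

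The main obstacle is keeping \emph{both} directions of the count sharp simultaneously. The lower bound (no collision of two complex zeros under reduction) is standard once a single separating linear form with small height is chosen and its discriminant is excluded. The upper bound (no extra $\overline{\mathbb{F}}_p$-zero appears) is more delicate: it requires a Nullstellensatz-type identity $\sum_i A_i G_i=f$ for a certificate $f$ witnessing membership in the ideal, and the coefficient heights and denominators of the $A_i$ must be tracked through every elimination step without exceeding the budget set by $(11n+4)r^{3n+1}h$ and $(55r+99)\log((2n+5)s)r^{3n+2}$. Reconciling these arithmetic height estimates with the combinatorial degree factors $r^{3n+1}$ and $r^{3n+2}$ is exactly where the explicit numerical constants in the statement are forced.
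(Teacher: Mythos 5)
This lemma is not proved in the paper at all: it is quoted verbatim as \cite[Theorem~2.1]{DOSS} (D'Andrea--Ostafe--Shparlinski--Sombra), so there is no internal argument to compare yours against. Judged on its own merits, your outline does follow the strategy of the actual proof in~\cite{DOSS}: reduce the zero-dimensional variety to a univariate eliminating polynomial via a separating linear form, control heights by the arithmetic B\'ezout and arithmetic Nullstellensatz machinery of D'Andrea--Krick--Sombra, and assemble $\mathfrak{A}$ from leading coefficients, discriminants, and Nullstellensatz denominators so that primes avoiding $\mathfrak{A}$ preserve the count in both directions. So the route is the right one.

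However, what you have written is a plan, not a proof, and the gap is exactly where you say it is. The statement's entire content lies in (a) the explicit numerical bound $\log\mathfrak{A}\le (11n+4)r^{3n+1}h+(55r+99)\log((2n+5)s)r^{3n+2}$ and (b) the ``no spurious points'' direction. For (a) you gesture at ``bounds of the shape $r^{O(n)}h+r^{O(n)}\log s$'' without carrying out the height bookkeeping through the elimination steps, so the claimed constants are asserted rather than derived. For (b) you correctly identify that one needs an effective membership certificate $\sum_i A_iG_i=f$ with controlled integer denominators (this is precisely the arithmetic Nullstellensatz input), but you explicitly defer this step as ``the main obstacle'' instead of resolving it; note also that your condition (iii), a single ``consistency resultant,'' is not obviously sufficient to exclude the reduced system acquiring a positive-dimensional component or extra isolated points --- in~\cite{DOSS} this is handled through the behaviour of the Chow form under reduction, which requires its own height estimate. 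Until those two steps are executed, the proposal cannot be accepted as a proof of the lemma; as a reconstruction of the cited result's architecture it is accurate.
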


Results of this type have previously appeared. For
 example Chang~\cite[Lemma~2.14]{Chang} has shown
the following result. 
Let
$$
\cV=\bigcap_{j=1,\ldots,s}[F_j=0],
$$
 be an affine variety in $\C^{n}$ defined by polynomials $F_j \in \Z[X_1,\ldots,X_n]$, $j=1, \ldots, s$, of height at most $h$
 and let $F\in \Z[X_1,\ldots,X_n]$ be a polynomial
of height at most $h$ such that there is $\balpha \in \cV$ with $F(\balpha)\neq 0$.
Then there is $\bbeta \in \cV$  with 
$F(\bbeta)\neq 0$ whose coordinates are algebraic numbers of height $O(h)$.
%

There are also modulo $p$ analogues of~\cite[Lemma~2.14]{Chang} 
which allow one to lift solutions to $\C$ from a variety modulo $p$ and we refer the reader to~\cite{Gros} for results of this type. One may also use effective versions of the B\'{e}zout identity, and more generally the Hilbert Nullstellensatz, to lift points on a variety modulo $p$ to $\C$, and this idea has previously been used 
in~\cite{BBK,BGKS,CKSZ, KMSV, Shp2}. 


\subsection{Tools from geometry of numbers}

Let $\left \{\vec{b}_1,\ldots,\vec{b}_m\right \}$ be a set of $m\le d$ linearly independent vectors in
${\R}^d$. The set
of vectors
$$ \cL = \left\{ \sum_{i=1}^m n_i \vec{b}_i :~ n_i \in \Z \right \},$$
is called an $d$-dimensional lattice of rank $m$. The set
$\left \{ \vec{b}_1,\ldots, \vec{b}_m\right \}$ is called a \textit{basis} of $\cL$. Each lattice has multiple sets of basis vectors, and we refer to any other set 
$\{\widetilde{\vec{b}}_1,\ldots,\widetilde{\vec{b}}_m\}$ of linearly independent vectors such that 
$$ \cL = \left\{ \sum_{i=1}^m n_i \widetilde{\vec{b}}_i :~ n_i \in \Z \right \}$$
as a basis. 
 We also define the determinant of $\cL$ as 
$$
\det \cL = \sqrt{\left|\det B\cdot B^T\right|},
$$
where $B$ is the $(m\times d)$-matrix with rows $\vec{b}_1,\ldots,\vec{b}_m$, and is independent of the choice of basis. We refer to~\cite{GrLoSch} for a background on lattices.

The following is~\cite[Lemma~1]{HB}.

\begin{lemma}
\label{lem:HB}
Let $\cL\subseteq \Z^d$ be a lattice of rank $m$.  Then $\cL$ has a basis $\vec{b}_1,\ldots,\vec{b}_m$ such that, for each $\vec{x}\in \cL$, we may write 
$$\vec{x}=\sum_{j=1}^{m}\lambda_j\vec{b}_j,$$
with 
$$\lambda_j\ll \frac{\|\vec{x}\|}{\|\vec{b}_j\|}.$$
We also have 
$$\det \cL\ll \prod_{j=1}^{m}\|\vec{b}_i\|\ll \det \cL.$$ 
\end{lemma}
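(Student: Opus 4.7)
\textbf{Proof plan for Lemma~\ref{lem:HB}.} The plan is to take $\vec{b}_1,\ldots,\vec{b}_m$ to be a basis realizing the successive minima of $\cL$, up to constants depending only on $d$. More precisely, I would let $\mu_1\le\ldots\le\mu_m$ denote the successive minima of $\cL$ with respect to the Euclidean norm, and invoke the standard fact from the geometry of numbers (see, e.g., \cite{GrLoSch}) that $\cL$ admits a basis $\vec{b}_1,\ldots,\vec{b}_m$ with $\|\vec{b}_j\|\ll\mu_j$ for each $j$. By the definition of successive minima, we also have $\mu_j\le \|\vec{b}_j\|$, so $\|\vec{b}_j\|\asymp \mu_j$ with implied constants depending only on $d$.

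The determinantal statement then follows from Minkowski's second theorem, which gives
$$
\mu_1\cdots \mu_m \asymp \det\cL,
$$
with implied constants depending only on $d$; combining with $\|\vec{b}_j\|\asymp \mu_j$ yields the claimed two-sided bound $\det\cL\ll \prod_j\|\vec{b}_j\|\ll \det\cL$.

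For the coefficient estimate on $\vec{x}=\sum_{j=1}^m \lambda_j\vec{b}_j$, I would work with the Gram--Schmidt orthogonalization $\vec{b}_1^*,\ldots,\vec{b}_m^*$ of the chosen basis. For a basis whose vectors are comparable to the successive minima, one has $\|\vec{b}_j^*\|\asymp \|\vec{b}_j\|$ with implied constants depending on $d$; this is the key quantitative input, and it follows by a short argument using that any $m-j+1$ linearly independent lattice vectors lying in the orthogonal complement of $\mathrm{span}(\vec{b}_1,\ldots,\vec{b}_{j-1})$ must have norms $\gg \mu_j$, hence the projection $\vec{b}_j^*$ cannot be much shorter than $\vec{b}_j$. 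Writing $\vec{x}=\sum_{k=1}^m c_k \vec{b}_k^*$ in the orthogonal basis, Pythagoras gives $|c_k|\le \|\vec{x}\|$ for every $k$, and the triangular change of basis between $\{\vec{b}_j\}$ and $\{\vec{b}_j^*\}$ (whose diagonal entries are $1$ and off-diagonal entries are $O(1)$ for a reduced basis) lets me recover the coefficients $\lambda_j$ by back-substitution starting from $j=m$, yielding $|\lambda_j|\ll \|\vec{x}\|/\|\vec{b}_j^*\|\asymp \|\vec{x}\|/\|\vec{b}_j\|$.

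The main potential obstacle is arguing cleanly that $\|\vec{b}_j^*\|\asymp \|\vec{b}_j\|$: naively, the Gram--Schmidt vectors of an arbitrary basis can be much shorter than the basis vectors, and one needs the successive-minima property of the basis to exclude this. Once this comparability is in hand, the back-substitution is purely mechanical, producing bounds $|\lambda_j|$ with implied constants depending on $d$ through the entries of the change-of-basis matrix.
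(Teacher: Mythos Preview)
Your outline is sound and is essentially the standard proof of this fact: pick a (Korkin--Zolotarev or Minkowski) reduced basis with $\|\vec{b}_j\|\asymp\mu_j$, get the determinant statement from Minkowski's second theorem, and deduce the coefficient bound by comparing $\vec{b}_j$ to its Gram--Schmidt vector $\vec{b}_j^*$ and back-substituting. One small slip: from $\vec{x}=\sum_k c_k\vec{b}_k^*$ Pythagoras gives $|c_k|\,\|\vec{b}_k^*\|\le\|\vec{x}\|$, not $|c_k|\le\|\vec{x}\|$; this is in fact what you want, since it feeds directly into $|\lambda_j|\ll\|\vec{x}\|/\|\vec{b}_j^*\|\asymp\|\vec{x}\|/\|\vec{b}_j\|$ after the triangular back-substitution (using $|\mu_{ij}|\le\tfrac12$ for a size-reduced basis and the monotonicity $\mu_1\le\cdots\le\mu_m$ to keep the accumulated error under control).

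As for comparison with the paper: the paper does not prove this lemma at all. It is quoted verbatim as \cite[Lemma~1]{HB} (Heath-Brown), with no argument given. Your sketch is precisely the route Heath-Brown takes in that reference, so you have reconstructed the intended proof rather than found an alternative one.
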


\begin{lemma}
\label{lem:linear}
Let $\alpha_1,\ldots,\alpha_d\in \C$ and  let $\cL$ denote the lattice 
$$
\cL=\{ (n_1,\ldots,n_d)\in \Z^d :~ \alpha_1n_1+\ldots+\alpha_dn_d=0\}.
$$
For integers $H_1,\ldots,H_d$ we  consider  the convex body 
$$
D=\{ (x_1,\ldots,x_d)\in \R^{d} :~  |x_i|\le H_i \}.
$$ 
If $\cL\cap D$ contains $d-1$ linearly independent points and there exists some $1\le \ell\le d$ such that $\alpha_\ell\neq 0$, then there exists some $1\le j \le d$ such that  for each $i = 1, \ldots, d$ there exist integers $a_i$ and $b_i$ satisfying 
$$
\frac{\alpha_i}{\alpha_j}=\frac{a_i}{b_i}, \qquad \gcd(a_i,b_i)=1, \qquad a_i, b_i \ll H^{d}, 
$$
where 
$$
H=\max_{1\le i \le d} H_i.
$$
\end{lemma}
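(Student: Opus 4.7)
The plan is to construct, from the $d-1$ given lattice points in $D$, an explicit non-zero integer vector $\vec{n}\in \Z^d$ with $|n_i|\ll H^{d-1}$ which is $\C$-proportional to $(\alpha_1,\ldots,\alpha_d)$; reading off the ratios $\alpha_i/\alpha_j = n_i/n_j$ then produces the required $a_i,b_i$.

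First I would verify that $\cL$ has rank exactly $d-1$. The hypothesis gives rank at least $d-1$. Conversely, $\cL$ is the kernel of the $\Q$-linear map $\varphi\colon\Z^d\to \C$, $\vec{n}\mapsto \alpha_1 n_1+\cdots+\alpha_d n_d$, and since $\alpha_\ell\neq 0$ this map is non-zero, so its kernel has rank at most $d-1$. Let $\vec{v}_1,\ldots,\vec{v}_{d-1}\in \cL\cap D$ be the $d-1$ linearly independent points, and form the $(d-1)\times d$ integer matrix $V$ whose rows are the $\vec{v}_i$; every entry of $V$ is bounded by $H$ in absolute value.

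Next, for each $i$ let $V_i$ denote the $(d-1)\times(d-1)$ minor of $V$ obtained by deleting the $i$-th column, and set $n_i:=(-1)^{i+1}\det V_i$. For any row index $k$, the $d\times d$ matrix obtained by prepending $\vec{v}_k$ to $V$ has two equal rows and hence vanishing determinant; expanding along the prepended row gives $\sum_{i=1}^d v_{k,i}n_i=0$, so $V\vec{n}^T=\vec{0}$. Since $V$ has rank $d-1$, some minor $V_i$ is non-zero, so $\vec{n}\neq \vec{0}$. By Hadamard's inequality applied to the rows of $V_i$, each of Euclidean norm at most $\sqrt{d-1}\,H$,
$$|n_i|=|\det V_i|\le \bigl(\sqrt{d-1}\,H\bigr)^{d-1}\ll H^{d-1}.$$

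The same relations $\sum_i \alpha_i v_{k,i}=0$ show that $(\alpha_1,\ldots,\alpha_d)^T$ also lies in the one-dimensional $\C$-null space of $V$, so $(\alpha_1,\ldots,\alpha_d)=c\vec{n}$ for some $c\in \C^\times$. Picking any $j$ with $n_j\neq 0$ yields $\alpha_j\neq 0$ and $\alpha_i/\alpha_j=n_i/n_j$ for every $i$; reducing each fraction to lowest terms gives coprime integers $a_i,b_i$ with $\max(|a_i|,|b_i|)\le \max(|n_i|,|n_j|)\ll H^{d-1}\ll H^d$. The only point needing care is the rank computation; once that is in place, the rest is a routine application of the Plücker/cofactor description of the null space of a full-rank rectangular matrix combined with Hadamard's inequality on its minors, and there is no serious obstacle in the argument.
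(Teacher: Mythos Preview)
Your proof is correct and follows essentially the same approach as the paper: both form the $(d-1)\times d$ matrix of the given lattice points, use its $(d-1)\times(d-1)$ minors together with Hadamard's inequality to obtain integers of size $\ll H^{d-1}$, and then read off the ratios $\alpha_i/\alpha_j$. The only cosmetic difference is that you package the minors as the Pl\"ucker vector $\vec{n}=((-1)^{i+1}\det V_i)_i$ spanning the null space, whereas the paper solves the linear system $X^{(j)}(\alpha_1,\ldots,\widehat{\alpha_j},\ldots,\alpha_d)^T=-\alpha_j(x_{1,j},\ldots,x_{d-1,j})^T$ via the adjugate; these are two ways of writing the same Cramer-type computation.
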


\begin{proof}
Choose $d-1$  linearly independent points $\vx^{(1)},\ldots, \vx^{(d-1)}$ satisfying
$$\vx^{(i)}=(x_{i,1},\ldots,x_{i,d})\in \cL\cap D, \qquad 1\le i \le d-1.$$
Let $X$ denote the $(d-1)\times d$ matrix whose $i$-th row is $\vx^{(i)}$ and let $X^{(j)}$ denote the $(d-1)\times (d-1)$ matrix obtained from $X$ by removing the $j$-th column. By assumption, the rank of $X$ equals $d-1$. Hence there exists some $1\le j \le d$ such that 
\begin{equation}
\label{eq:det}
\det X^{(j)}\neq 0.
\end{equation}
By symmetry we may suppose $j=d$. Since each $\vx^{(i)}\in \cL\cap D$, we have 
$$
X^{(d)}\begin{pmatrix} \alpha_1 \\ \alpha_2 \\ \vdots \\ \alpha_{d-1} \end{pmatrix}=-\alpha_d \begin{pmatrix} x_{1,d} \\ x_{2,d} \\ \vdots \\ x_{d-1,d} \end{pmatrix}.
$$
Note that~\eqref{eq:det} and the assumption $\alpha_{\ell}\neq 0$ implies $\alpha_{d}\neq 0$. Let $Y^{(d)}$ denote the adjoint matrix of $X^{(d)}$, 
thus 
$$
  X^{(d)} Y^{(d)}=\det X^{(d)} I_{d-1},
 $$
where $I_{d-1}$ is the $(d-1)\times (d-1)$-identity matrix.
Hence,  the above implies 
$$
\det X^{(d)}\begin{pmatrix} \alpha_1 \\ \alpha_2 \\ \vdots \\ \alpha_{d-1} \end{pmatrix}=-\alpha_d Y^{(d)}\begin{pmatrix} x_{1,d} \\ x_{2,d} \\ \vdots \\ x_{d-1,d} \end{pmatrix}.
$$
By  Hadamard's inequality  and the definition of $H$, 
$$
\det X^{(d)}\ll H^{d},
$$
and 
$$
Y^{(d)}\begin{pmatrix} x_{1,d} \\ x_{2,d} \\ \vdots \\ x_{d-1,d} \end{pmatrix}=\begin{pmatrix} y_{1} \\ y_{2} \\ \vdots \\ y_{d-1} \end{pmatrix}, 
$$
for some integers $y_1,\ldots,y_{d-1} \ll H^{d},$ from which the result follows.
\end{proof}

\subsection{Tools from additive combinatorics}
\label{sec:additivecomb}

Our proof of Theorem~\ref{thm:main1} uses Lemma~\ref{lem:TAMS} to reduce to counting solutions to multiplicative equations over $\C$ to which the following result of Chang~\cite[Proposition~2]{Chang} may be applied, see also~\cite[Remark~1]{Chang}.

\begin{lemma}  
\label{lem:chang}
For each integer $d\ge 1$ there exist a constant $B_d$, depending only on $d$, such that the following holds. Let $\gamma_0,\ldots,\gamma_d\in \C$ and define the set $\cA$ by 
$$
\cA=\{ \gamma_0+\gamma_1h_1+\ldots+\gamma_dh_d :~ |h_i|\le H_i\}.
$$
For any $\lambda \in \C^{*}$ the number of solutions to 
$$
a_1a_2=\lambda, \quad a_1,a_2\in \cA,
$$
is bounded by $\exp\left(B_d\log{|\cA|}/\log\log{|\cA|}\right)$.
\end{lemma}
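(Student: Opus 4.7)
\emph{Proof plan.} The plan is to reduce the multiplicative equation $a_1 a_2 = \lambda$ to a divisor function estimate in a number field of bounded degree. First I would exploit the GAP structure: after reducing to the case in which the representation is proper (as explained below), each $a\in\cA$ corresponds uniquely to an integer tuple $(h_1,\ldots,h_d)\in[1,H_1]\times\cdots\times[1,H_d]$ via $a=\gamma_0+\gamma_1 h_1+\ldots+\gamma_d h_d$. The equation $a_1a_2=\lambda$ then becomes a bilinear Diophantine equation in $2d$ integer unknowns with coefficients in the finitely generated ring $R=\Z[\gamma_0,\ldots,\gamma_d]$.

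Next I would descend to a setting in which $\gamma_0,\ldots,\gamma_d$ and $\lambda$ are algebraic numbers in a number field $K$ with $[K:\Q]$ bounded solely in terms of $d$. For transcendental choices of $\gamma_i$, a Hilbert-irreducibility-type specialization argument allows one to replace the $\gamma_i$ and $\lambda$ by algebraic approximations without decreasing the count: the set of parameters $(\gamma_0,\ldots,\gamma_d,\lambda)\in\C^{d+2}$ for which the bilinear equation has at least $N$ integer solutions in the box is constructible, and when non-empty contains a dense set of algebraic points of bounded degree.

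The core estimate is then a divisor function bound in the $S$-integers $\mathcal{O}_{K,S}$, where $S$ absorbs the denominators of the $\gamma_i$ and of $\lambda$. The equation $a_1a_2=\lambda$ in $\mathcal{O}_{K,S}$ forces the fractional ideal $(a_1)$ to divide $(\lambda)$. Since $N_{K/\Q}(a)\ll H^{O(1)}$ for every $a\in\cA$ (by applying the triangle inequality to all archimedean conjugates of $a$), one obtains $N_{K/\Q}(\lambda)\ll H^{O(1)}$ and therefore
$$\tau_{K,S}((\lambda))\le \exp\bigl(O\bigl(\log N_{K/\Q}(\lambda)/\log\log N_{K/\Q}(\lambda)\bigr)\bigr)\le\exp\bigl(B_d\log|\cA|/\log\log|\cA|\bigr),$$
with implied constants depending only on $d$ and on $[K:\Q]$. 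Finally, a geometry-of-numbers step in the spirit of Lemma~\ref{lem:HB} bounds the number of elements $a\in\cA$ realising any single divisor ideal, costing only an $O(1)$ factor through the rank of $\mathcal{O}_{K,S}^{*}$.

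The main obstacle is carrying out the descent uniformly, so that $[K:\Q]$ depends only on $d$ rather than on the particular generators of $\cA$. Unexpected algebraic relations among $1,\gamma_0,\ldots,\gamma_d$ would be handled through Lemma~\ref{lem:linear}, which pins down the denominators of any such relation and permits reducing to a proper GAP of strictly smaller rank; one then closes the argument by induction on $d$, the base case $d=0$ being trivial. A secondary technical point is the conversion from ideal divisors to element divisors, where one uses that $\cA$ lies in a box of controlled archimedean size to limit the number of units that can multiply a fixed generator of an ideal back into $\cA$.
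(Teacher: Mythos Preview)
The paper does not prove this lemma; it is quoted from Chang~\cite[Proposition~2]{Chang}. Your outline points in the right direction, but the specialization step has genuine gaps.

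The assertion that the constructible locus of parameters admitting at least $N$ solutions contains algebraic points of degree bounded \emph{solely in terms of $d$} is not delivered by a generic Hilbert--irreducibility argument: that locus is cut out by up to $N$ quadratic equations, and B\'ezout alone gives degree $2^{N}$. What actually bounds the degree is that, after eliminating $\lambda$, the relations $L_{k}(\gamma)L'_{k}(\gamma)=L_{1}(\gamma)L'_{1}(\gamma)$ (with $L_{k}(\gamma)=\gamma_{0}+\sum_{i}\gamma_{i}h_{i}^{(k)}$) all lie in the $\binom{d+2}{2}$-dimensional space of quadratic forms in $\gamma_{0},\ldots,\gamma_{d}$, so at most $O(d^{2})$ of them are $\Q$-linearly independent; this is the missing observation. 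Moreover, even granting $[K:\Q]=O_{d}(1)$, the bound $N_{K/\Q}(a)\ll H^{O(1)}$ does not follow from the triangle inequality alone: that gives $|\sigma(a)|\le\bigl(1+\sum_{i}|\sigma(\gamma_{i})|\bigr)H$, and the product over embeddings is $H^{O_{d}(1)}$ only if the heights of the \emph{specialized} $\gamma_{i}$ are $O_{d}(\log H)$. This height control is precisely what a tool such as~\cite[Lemma~2.14]{Chang} (alluded to in the present paper) supplies, and it must be invoked explicitly.

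A smaller issue: taking $S$ to absorb all prime denominators of the $\gamma_{i}$ can make $|S|$ as large as $O(\log H/\log\log H)$, so the rank of $\cO_{K,S}^{*}$ is not $O_{d}(1)$ and your ``$O(1)$ factor'' for the passage from ideal divisors to element divisors fails. Clear denominators once and work in $\cO_{K}$ instead, where the unit rank is bounded by $[K:\Q]-1=O_{d}(1)$.
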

 
\section{An iterative inequality}

\subsection{Formulation of the result} 
Our main input for the proof of Theorem~\ref{thm:main1} is the following iterative inequality which combines some ideas of Chang~\cite{Chang} with lattice basis reduction. Note that, as in~\cite{Chang}, it is not necessary to assume our generalised arithmetic progression is proper. 

Recall that for $\cA,\cB\subseteq \F_p$ and 
$\lambda\in \F_p$ we define $I_p(\cA,\cB,\lambda)$ by~\eqref{eq:IpA}.  

We also recall that an integer $n$ is called $y$-smooth if all prime divisors $p$ of $n$ satisfy $p \le y$.


\begin{lemma}
\label{lem:iter}
Let $H$, $d$, $e$ be  positive integers with $e\le d$  and let $H$ be sufficiently large.  
There exists a constant  $B_d$   depending only on $d$, and an integer $Z_{d,e}$ which
\begin{itemize}
\item[(i)] 
 is $O(H^{1/\gamma_{d+e+1}})$-smooth with $\gamma_{d+e+1}$ given by~\eqref{eq:gamma}, 
\item[(ii)] is divisible by all  primes $p \le   H^{d+e+o(1)}$,  
\item[(iii)]   satisfies 
$$
\log{Z_{d,e}}\ll H^{(d+e)(d+1)(e+1)} \log{H},
$$ 
\end{itemize}
such that for any prime $p \nmid  Z_{d,e}$ the following holds. 
Let $\lambda\in \overline\F_p^*$
and $\cA,\cB\subseteq \overline\F_p$  generalised arithmetic progressions of the form \begin{equation}
\label{eq:Aiter}
\cA=\{ \alpha_0+\alpha_1h_1+\ldots +\alpha_d h_d :~ |h_i|\le H, \ i =1, \ldots, d\},
\end{equation}
and 
\begin{equation}
\label{eq:Biter}
\cB=\{ \beta_0+\beta_1j_1+\ldots +\beta_e j_e :~ |j_i|\le H, \ i =1, \ldots, e\},
\end{equation}
with $d,e\ge 2$ and 
$$\alpha_1,\ldots,\alpha_d,\beta_1,\ldots,\beta_e \in \overline\F_p^*.
$$ 
 There exists a constant $\tCde$ depending only on $d$ and $e$,  integers $\td$ and $\te$ satisfying 
$$
\td \le d, \qquad \te \le e, \qquad  \td+\te<d+e,
$$ 
 generalised arithmetic progressions $\tcA,\tcB$ of the form
\begin{align*}
& \tcA=\{ \talpha_0+\talpha_1h_1+\ldots +\talpha_{\td} h_{\td} :~ |h_i|\le \tCde H, \  i =1, \ldots, \td\},\\
& \tcB=\{ \tbeta_0+ \tbeta_1j_1+\ldots + \tbeta_{\te} j_{\te} :~ |j_i|\le \tCde H, \ i =1, \ldots, \te \},
\end{align*}
with 
$$\talpha_1 \ldots,\talpha_d, \tbeta_1 \ldots \tbeta_e \in \overline\F_p^{*},$$
and some $\mu \in \overline\F_p^*$ such that 
$$
I_p(\cA,\cB,\lambda)\le \exp\left(B_d \log{H}/\log\log{H}\right)I_p(\tcA,\tcB,\mu ).
$$
\end{lemma}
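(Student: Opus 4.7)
My plan combines three ingredients: Lemma~\ref{lem:TAMS} to transfer the counting problem between $\overline\F_p$ and $\C$, Chang's bound in Lemma~\ref{lem:chang} to handle a ``proper'' case over $\C$, and the lattice-theoretic Lemma~\ref{lem:linear} together with Minkowski-type arguments for rank reduction. To construct $Z_{d,e}$, I would first fix lifts $\talpha_i,\tbeta_i,\widetilde\lambda\in\overline\Q$ of the relevant elements of $\overline\F_p$ of controlled height (for example, via minimal-polynomial lifts of a primitive element of the finite extension they generate) and then apply Lemma~\ref{lem:TAMS} to the polynomial system in $\Z[h_1,\ldots,h_d,j_1,\ldots,j_e]$ consisting of the bilinear relation $(\talpha_0+\sum\talpha_i h_i)(\tbeta_0+\sum\tbeta_i j_i)=\widetilde\lambda$ together with the $d+e$ box polynomials $\prod_{k=-H}^{H}(h_i-k)$ and $\prod_{k=-H}^{H}(j_i-k)$. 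With $n=d+e$, $s=d+e+1$, $r=2H+1$ and heights of order $H\log H$, this produces an integer $\mathfrak{A}$ that I take to be $Z_{d,e}$ (enlarged so as to be divisible by every prime below $H^{d+e+o(1)}$); the smoothness and size bounds follow directly from the explicit estimate in Lemma~\ref{lem:TAMS}.

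For $p\nmid Z_{d,e}$, Lemma~\ref{lem:TAMS} yields equality between $I_p(\cA,\cB,\lambda)$ and the analogous count in $\C$, so the problem becomes purely Archimedean. I then case-split on the properness of the maps $\vh\mapsto\alpha_0+\sum\alpha_i h_i$ and $\vj\mapsto\beta_0+\sum\beta_i j_i$. If both are proper, the count equals the number of distinct pairs $(a,b)\in\cA\times\cB$ with $ab=\lambda$, and Chang's Lemma~\ref{lem:chang} applied to $\cA\cup\cB$ (viewed as a single generalised arithmetic progression of rank at most $d+e+1$) bounds this by $\exp(B_d\log H/\log\log H)$; in this case I take $\tcA=\{\talpha_0\}$ and $\tcB=\{\tbeta_0\}$ (single-point progressions with $\td=\te=0$) and $\mu=\talpha_0\tbeta_0$, so that $I_p(\tcA,\tcB,\mu)=1$. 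If instead, say, the map for $\cA$ is non-proper, the kernel lattice $\cL_\cA=\{\vh\in\Z^d:\sum\alpha_i h_i=0\}$ is nontrivial; using Minkowski's successive minima I extract a short basis, and then invoke Lemma~\ref{lem:linear} to obtain rational relations among the $\alpha_i$ of controlled height. These let me reparametrise $\cA$ as a progression $\tcA$ of strictly smaller rank $\td\le d-1$ with coefficient range at most $\tCde H$, set $\tcB=\cB$, and obtain $\td+\te<d+e$.

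The main obstacle I expect is the rank-reduction step when one of the progressions is non-proper: I must ensure that the new $\tcA$ has coefficient range only a constant times $H$ (rather than a polynomial in $H$) and that the multiplicity incurred in the reparametrisation fits inside the allowed loss factor $\exp(B_d\log H/\log\log H)$. This will require a delicate interplay between the size of the fiber of $\vh\mapsto\alpha_0+\sum\alpha_i h_i$, the successive minima of $\cL_\cA$, and the magnitude of the rational relations supplied by Lemma~\ref{lem:linear}, and probably a split into subcases according to the order of magnitude of the fiber. Secondary difficulties will be controlling the heights of the lifts from $\overline\F_p$ to $\overline\Q$ and explicitly tracking constants in Lemma~\ref{lem:TAMS} so as to match the claimed bound $\log Z_{d,e}\ll H^{(d+e)(d+1)(e+1)}\log H$ and the $H^{1/\gamma_{d+e+1}}$-smoothness.
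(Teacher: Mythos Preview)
Your proposal has a fundamental gap in the construction of $Z_{d,e}$. The lemma requires $Z_{d,e}$ to depend only on $H,d,e$: it is fixed \emph{before} the prime $p$, the parameters $\alpha_i,\beta_j\in\overline\F_p$, and $\lambda$ are chosen. Your construction starts by lifting the specific $\alpha_i,\beta_j,\lambda$ from $\overline\F_p$ to $\overline\Q$ and applying Lemma~\ref{lem:TAMS} to the resulting system; the integer $\mathfrak A$ you obtain then depends on those lifts, hence on $p$ and on $\cA,\cB$, which is circular. There is a second, related problem: Lemma~\ref{lem:TAMS} applies to polynomials in $\Z[T_1,\ldots,T_n]$, but your bilinear polynomial $(\talpha_0+\sum\talpha_i h_i)(\tbeta_0+\sum\tbeta_j j_j)-\widetilde\lambda$ has algebraic-number coefficients, not integers. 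And any lift of an element of $\overline\F_p$ via a minimal polynomial has height of order $\log p$, not something controllable by $H$ alone.

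The paper avoids all of this by reversing the roles of parameters and variables. It treats the $\alpha_i,\beta_j$ as the \emph{unknowns} $X_i,Y_j$ of a variety and the integers $h_i,j_i$ as the coefficients of the defining polynomials. Concretely, after normalising $\alpha_1=\beta_1=1$, each solution $(\vh,\vj)$ of $ab=\lambda$ gives a polynomial
\[
P_{\vh,\vh_0,\vj,\vj_0}(\vec X,\vec Y)
=(X_0+\textstyle\sum h_{0,i}X_i)(Y_0+\textstyle\sum j_{0,i}Y_i)-(X_0+\textstyle\sum h_iX_i)(Y_0+\textstyle\sum j_iY_i)
\]
with \emph{integer} coefficients bounded by $O(H)$. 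There are only $H^{O_{d,e}(1)}$ possible such systems (choices of $\vh_0,\vj_0$ and of at most $\Delta=(d+1)(e+1)-1$ linearly independent polynomials), so one can define $Z_{d,e}$ as the product of the $\mathfrak A$'s from Lemma~\ref{lem:TAMS} over all of them, together with determinants controlling linear independence mod $p$. For $p\nmid Z_{d,e}$ the variety $V_p\subseteq\overline\F_p^{d+e+2}$ contains the point $(\alpha_0,\ldots,\alpha_d,\beta_0,\ldots,\beta_e)$, hence the corresponding complex variety $V$ is nonempty and yields $(\brho,\btau)\in\C^{d+e+2}$ with $\rho_1=\tau_1=1$. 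Chang's Lemma~\ref{lem:chang} is then applied to this \emph{complex} point, producing an auxiliary linear constraint on the integers $h_i$ (or $j_i$), after which the rank reduction via Lemmas~\ref{lem:HB} and~\ref{lem:linear} proceeds roughly along the lines you sketch. The essential idea you are missing is this inversion: the GAP generators become the variety variables, so that the polynomial systems are integer-coefficient and range over a finite set independent of $p,\cA,\cB$.
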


We split the proof of Lemma~\ref{lem:iter} in a series of steps. 

\subsection{Elimination undesired primes} 
We first denote 
\begin{equation}
\label{eq:Z0def}
Z_0 =\prod_{p\le C_d H^{d}}p
\end{equation}
for an appropriately large constant $C_d$, which depends only on $d$. 
We now fix $p \nmid Z_0$, thus 
 \begin{equation}
\label{eq:large p}
p >   C_d H^{d}.
\end{equation}  

We first construct the integer $Z_{d,e}$. For $\vh,\vh_0 \in \Z^{d}$ and $\vj,\vj_0 \in \Z^{e}$ define the polynomial
\begin{equation}
\begin{split} 
\label{eq:Phhjj}
&P_{\vh,\vh_0, \vj,\vj_0}\(\vec{X},\vec{Y} \)\\
&\qquad =(X_0+X_1h_{0,1}+\ldots+X_d h_{0,d})(Y_0+Y_1 j_{0,1}+\ldots+Y_e j_{0,e}) \\ 
& \qquad  \qquad     -(X_0+X_1h_1+\ldots+X_d h_d)(Y_0+Y_1 j_{1}+\ldots+Y_e j_{e}).
\end{split}
\end{equation}
We may identify the polynomial  $P_{\vh,\vh_0 \vj,\vj_0}(\vec{X},\vec{Y} )$ with a point in the vector space $\C^{\Delta}$
where 
$$
\Delta = (d+1)(e+1)-1 =  de+d +e,
$$  
which is formed by its coefficients. Suppose $\cM\subseteq \Z^{d}\times \Z^{e}$  satisfies  $|\cM|\le \Delta $ and the set
$$\{P_{\vh,\vh_0, \vj,\vj_0}(\vec{X},\vec{Y} ) :~ (\vh,\vj)\in \cM\},$$
is linearly independent over $\C$. 

Let  $M\(\vh_0,\vj_0,\cK\)$ denote the $|\cM|\times \Delta$ matrix whose rows correspond to coefficients of the polynomials $P_{\vh,\vh_0, \vj,\vj_0}(\vec{X},\vec{Y} )$ with $(\vh,\vj)\in \cM.$ Define 
\begin{equation}
\label{eq:Z1def}
Z_1\(\vh_0,\vj_0,\cM\)=\det M_0\(\vh_0,\vj_0,\cM\),
\end{equation}
where  $M_0\(\vh_0,\vj_0,\cM\)$ is a $|\cM|\times |\cM|$ submatrix of $M\(\vh_0,\vj_0,\cM\)$ with nonzero determinant. If 
$$\vh_0\in [-H,H]^{d}, \qquad \vj_0\in [-H,H]^{e}, \qquad \cM\subseteq [-H,H]^{d}\times [-H,H]^{e}
$$ then for each $(\vh,\vj)\in \cM$ the polynomial $P_{\vh,\vh_0, \vj,\vj_0}$ has height at most $2\log{H}+O(1)$. 
Clearly there are 
\begin{equation}
\label{eq:Choices}
\begin{split}
W = (2H+1)^d \cdot (2H+1)^e \cdot & \sum_{r=1}^{\Delta }  \binom{(2H+1)^{d+e}}{r}\\
& \ll H^{d+e +  \Delta (d+e)} = H^{(d+e)(\Delta +1)} 
\end{split} 
\end{equation}
choices for the above triple $\(\vh_0,\vj_0,\cM\)$. 

By  Hadamard's inequality
\begin{equation}
\label{eq:Z1b}
Z_1\(\vh_0,\vj_0,\cK\)\ll H^{2|\cM|}\ll H^{2\Delta }.
\end{equation}
 Define 
\begin{equation}
\label{eq:Z1}
Z_1=\prod_{\substack{\vh_0,\in [-H,H]^{d} \\ \vj_0\in [-H,H]^{e} \\ \cM\subseteq [-H,H]^{d}\times [-H,H]^{e} \\ |\cM|\le \Delta }}Z_1\(\vh_0,\vj_0,\cM\),
\end{equation}  
so, recalling~\eqref{eq:Choices} and~\eqref{eq:Z1b}, we see that  
$$
\log{Z_1}\ll W \log H \ll H^{(d+e)(\Delta +1)} \log{H},
$$
and that  $Z_1$ is $O(H^{2\Delta })$-smooth. 

For each $\vh_0,\vj_0,\cM$ as above, let $V\(\vh_0,\vj_0,\cM\)$ denote the variety
\begin{align*}
V\(\vh_0,\vj_0,\cM\)& =\bigcap_{(\vh, \vj)\in \cM}\left\{  (\vx,\vy)\in  \C^{d+1}\times \C^{e+1} :~  P_{\vh,\vh_0, \vj,\vj_0}(\vx,\vy)= 0 \right \} \\
& \qquad \qquad \quad   \bigcap \left\{  (\vx,\vy)\in \C^{d+1}\times \C^{e+1} :~ x_1=1,~ y_1=1 \right \},
\end{align*}
and let $V_p\(\vh_0,\vj_0,\cM\)$ be the reduction of $V\(\vh_0,\vj_0,\cM\)$ modulo $p$
\begin{align*}
V_p\(\vh_0,\vj_0,\cM\)& =\bigcap_{(\vh, \vj)\in \cM}\left\{  (\vx,\vy)\in  \overline \F_p^{d+1}\times \overline \F_p^{e+1} :~  P_{\vh,\vh_0, \vj,\vj_0}(\vx,\vy)= 0 \right \} \\
& \qquad \qquad \quad   \bigcap \left\{  (\vx,\vy)\in \overline \F^{d+1}\times \overline \F_p^{e+1} :~ x_1=1,~ y_1=1 \right \}.
\end{align*}
If $|V\(\vh_0,\vj_0,\cM\)|=\infty$ then define 
$$Z_2\(\vh_0,\vj_0,\cM\)=1.$$

Otherwise, that is, if $|V\(\vh_0,\vj_0,\cM\)|<\infty$,  by Lemma~\ref{lem:TAMS} there exists a positive integer  $Z_2\(\vh_0,\vj_0,\cM\)$ satisfying 
$$
Z_2\(\vh_0,\vj_0,\cM\)\ll H^{1/\gamma_{d+e+1}},
$$
such that for each prime $p$ not dividing $Z_2\(\vh_0,\vj_0,\cM\)$, we have 
$$|V\(\vh_0,\vj_0,\cM\)|=|V_p\(\vh_0,\vj_0,\cM\)|.$$
Denote
\begin{equation}
\label{eq:Z2}
Z_2=\prod_{\substack{\vh_0,\in [-H,H]^{d} \\ \vj_0\in [-H,H]^{e} \\ \cM\subseteq [-H,H]^{d}\times [-H,H]^{e} \\ |\cM|\le \Delta }}Z_2\(\vh_0,\vj_0,\cM\). 
\end{equation}

With $Z_1$ and $Z_2$ as in~\eqref{eq:Z1} and~\eqref{eq:Z2} we define
$$
Z_{d,e}=Z_0Z_1 Z_2.
$$
Since $\gamma_{d+e+1} \le \Delta^{-1}$ we see that $Z_{d,e}$ is $O(H^{1/\gamma_{d+e+1}})$-smooth and satisfies 
$$
\log{Z_{d,e}}\ll H^{(d+e)(\Delta +1 )} \log{H}.
$$ 

From now on we only consider primes $p \nmid Z_{d,e}$.

\subsection{Local to global lifting  of rational points on some varieties} 
Fix some prime $p$ not dividing $Z_{d,e}$. 
With $\cA,\cB$ as in~\eqref{eq:Aiter} and~\eqref{eq:Biter}, choose 
\begin{align*}
& \cH \subseteq [-H,H]^{d}\cap \Z^{d}, \\
& \cJ\subseteq [-H,H]^{e}\cap \Z^{e},
\end{align*}
 such that the points 
\begin{align*}
& \alpha_0+h_1\alpha_1+\ldots+h_d\alpha_d, \quad (h_1,\ldots,h_d)\in \cH,\\
& \beta_0+j_1\beta_1+\ldots+j_e\beta_e, \quad (j_1,\ldots,j_e)\in \cJ,
\end{align*}
are distinct modulo $p$ and for each $a\in \cA$ there exists some integer vector $(h_1,\ldots,h_d)\in \cH$ such that 
$$a=\alpha_0+h_1\alpha_1+\ldots+h_d\alpha_d$$ 
and for each $b\in \cB$ there exists some $(j_1,\ldots,j_e)\in \cJ$ such that 
$$b=\beta_0+j_1\beta_1+\ldots+j_e\beta_e.$$
 Write 
$$ 
\vh =(h_1,\ldots,h_d) \mand \vj=(j_1,\ldots,j_e),
$$
so that $I_p(\cA,\cB,\lambda)$ is bounded by the number of solutions to
\begin{equation}
\label{eq:eqn}
(\alpha_0+\alpha_1h_1+\ldots+\alpha_d h_d)(\beta_0+\beta_1 j_{1}+\ldots+\beta_e j_{e})\equiv \lambda \mod{p},
\end{equation}
with $\vh\in \cH$ and $\vj\in \cJ$. Dividing both sides of~\eqref{eq:eqn} by $\alpha_1\beta_1$ and modifying $\alpha_0,\ldots,\alpha_d,\beta_0,\ldots,\beta_e,\lambda$ if necessary, we may assume 
\begin{equation}
\label{eq:alpha1}
\alpha_1=\beta_1=1.
\end{equation}
This reduction allows for a convenient application of Lemma~\ref{lem:TAMS}. In what follows, we will construct a variety over $\overline \F_p$ which contains the point  $(\alpha_0,\dots,\alpha_d,\beta_0,\dots,\beta_d)$ and the assumption that $\alpha_1=\beta_1=1$ allows us to obtain a nonzero point in the corresponding variety over $\C$ after applying Lemma~\ref{lem:TAMS}.

Let $\cK\subseteq \cH\times \cJ$ denote the set 
$$
\cK=\{ \(\vh, \vj\)\in \cH\times \cJ :~ \vh, \vj  \ \text{satisfy~\eqref{eq:eqn}}\},
$$
so that 
\begin{equation}
\label{eq:IABK}
I_p(\cA,\cB,\lambda)=|\cK|.
\end{equation}

Since we may assume $\cK\neq \emptyset$, fix some $(\vh_0, \vj_0)\in \cK$ and for each $(\vh, \vj)\in \cK$ consider the polynomial $P_{\vh,\vh_0, \vj,\vj_0}\(\vec{X},\vec{Y} \)$, given by~\eqref{eq:Phhjj}.
Clearly for $(\vh_0, \vj_0) \ne (\vh, \vj)$  the polynomial $P_{\vh,\vh_0, \vj,\vj_0}\(\vec{X},\vec{Y} \)$
is not identical to zero over $\C$. Indeed, it is enough to consider the specialisations $P_{\vh,\vh_0, \vj,\vj_0}\(\(1, 0\ldots, 0\),\vec{Y} \)$  
and $P_{\vh,\vh_0, \vj,\vj_0}(\vec{X},\(1, 0\ldots, 0\))$ to see this. 
Furthermore,  if $p > 2H$ (which 
is guaranteed by our assumption)  then  $(\vh_0, \vj_0) \ne (\vh, \vj)$  implies  $(\vh_0, \vj_0) \not \equiv  (\vh, \vj) \mod p$ 
and we see that $P_{\vh,\vh_0, \vj,\vj_0}(\vec{X},\vec{Y} )$ 
is also not identical to zero over   $\overline \F_p$.

Let $V_p\subseteq \overline \F^{d+e+2}_p$ denote the variety \begin{align*}
V_p&=\bigcap_{(\vh,\vj)\in \cK}\left\{  (\vx,\vy)\in \overline \F^{d+1}_p\times \overline\F^{e+1}_p :~  P_{\vh,\vh_0, \vj,\vj_0}(\vx,\vy)= 0 \right \} \\ & \qquad \qquad \qquad  \quad \bigcap \left\{  (\vx,\vy)\in \overline \F^{d+1}_p\times \overline\F^{e+1}_p :~ x_1=1, ~ y_1=1 \right \}. 
\end{align*}
 Let $\cM \subseteq \cK$ be a maximal set  of $(\vh, \vj)\in \cK$, such that 
the  polynomials $P_{\vh,\vh_0, \vj,\vj_0}(\vec{X}, \vec{Y} )$
are linearly independent over $\C$. With $Z_1\(\vh_0,\vj_0,\cM\)$ defined as in~\eqref{eq:Z1def}, with respect to such $\cK$, since 
$$p \nmid Z_1\(\vh_0,\vj_0,\cM\)
$$
we conclude that  $\cM \subseteq \cK$ is also a maximal set  such that 
the  polynomials $P_{\vh,\vh_0, \vj,\vj_0}(\vec{X}, \vec{Y} )$
are linearly independent over $\overline \F_p$.  
Hence
\begin{align*}
V_p&=\bigcap_{(\vh,\vj)\in \cM}\left\{  (\vx,\vy)\in \overline \F^{d+1}_p\times \overline\F^{e+1}_p :~  P_{\vh,\vh_0, \vj,\vj_0}(\vx,\vy)= 0 \right \} \\ & \qquad \qquad \qquad  \quad \bigcap \left\{  (\vx,\vy)\in \overline \F^{d+1}_p\times \overline\F^{e+1}_p :~ x_1=1, ~ y_1=1 \right \} 
\end{align*}
 and $1 \le |\cM|\le \Delta$.  
  By definition of $\cK$ and~\eqref{eq:alpha1}, we have 
\begin{equation}
\label{eq:alphain}
(\alpha_0,\ldots,\alpha_d,\beta_0,\ldots,\beta_e)\in V_p.
\end{equation}
 Let $V\subseteq \C^{d+e+2}$ denote the variety
\begin{equation}
\begin{split} 
\label{eq:Vdef}
V& =\bigcap_{(\vh, \vj)\in \cM}\left\{  (\vx,\vy)\in  \C^{d+1}\times \C^{e+1} :~  P_{\vh,\vh_0, \vj,\vj_0}(\vx,\vy)= 0 \right \} \\
& \qquad \qquad \quad   \bigcap \left\{  (\vx,\vy)\in \C^{d+1}\times \C^{e+1} :~ x_1=1,~ y_1=1 \right \}.
\end{split} 
\end{equation}
We next show there exists some $(\brho,\btau)=(\rho_0,\rho_1,\ldots,\rho_d,\tau_0,\tau_1,\ldots,\tau_e)\in \C^{d+1}\times \C^{e+1}$ satisfying 
\begin{equation}
\label{eq:rho}
(\brho,\btau)\in V, \quad (\rho_1,\ldots,\rho_d)\neq \mathbf{0}, \quad (\tau_1,\ldots,\tau_e) \neq \mathbf{0}.
\end{equation}
 Certainly it is enough to show that
\begin{equation}
\label{eq:V nonempty}
|V|\ge 1,
\end{equation}
  as the non-vanishing conditions in~\eqref{eq:rho}
 are obvious  because any point $(\brho,\btau)\in V$ satisfies $\rho_1=\tau_1=1$.  
  
 We may assume 
\begin{equation}
\label{eq:Vbounded}
|V|<\infty,
\end{equation}
 since otherwise~\eqref{eq:rho} is trivial. 
We next apply  Lemma~\ref{lem:TAMS}.  The assumption~\eqref{eq:Vbounded} and that  
$$p \nmid  Z_2\(\vh_0,\vj_0,\cM\)
$$ 
implies
\begin{equation}
\label{eq:VVp}
|V|=|V_p|.
\end{equation}
We see from~\eqref{eq:alphain} that
$$
|V_p|\ge 1.
$$
Combining the above with~\eqref{eq:VVp}, we obtain~\eqref{eq:V nonempty}. 

 Hence there exists some $(\brho,\btau) \in \C^{d+1}\times \C^{e+1}$ satisfying~\eqref{eq:rho}. 
 Note that from~\eqref{eq:Vdef} we have 
\begin{equation}
\label{eq:rho1}
\rho_1=\tau_1=1.
\end{equation}

\subsection{Reduction to counting solutions to a multiplicative congruence on a complex line} 
We see that any solution to~\eqref{eq:eqn} satisfies 
\begin{equation}
\label{eq:rhorhorho}
(\rho_0+\rho_1h_1+\ldots+\rho_d h_d)(\tau_0+\tau_1 j_{1}+\ldots+\tau_e j_{e})=\vartheta,
\end{equation}
where 
$$
\vartheta=(\rho_0+\rho_1h_{0,1}+\ldots+\rho_d  h_{0,d})(\tau_0+\tau_1 j_{0,1}+\ldots+\tau_e j_{0,e}).
$$

Consider the following  two cases 
\begin{itemize}
\item   If $\vartheta=0$,   then either 
$$
\rho_0+\rho_1h_1+\ldots+\rho_d h_d=0,
$$
or 
$$
\tau_0+\tau_1j_1+\ldots+\tau_ e j_e=0.
$$

\item  If $\vartheta\neq 0$, then by Lemma~\ref{lem:chang}, there exists a set of 
$$\exp\left(B_d\log{H}/\log\log{H}\right)$$ pairs $\Omega=\{(\omega_1, \omega_2)\}$ such that any solution to~\eqref{eq:rhorhorho} satisfies 
$$
\rho_0+\rho_1h_1+\ldots+\rho_d h_d=\omega_1, \quad \tau_0+\tau_1j_1+\ldots+\tau_e j_e=\omega_2,
$$
for some $(\omega_1, \omega_2)\in \Omega$. 
\end{itemize}

Taking a maximum over the above two cases, we see that there exists some $\xi \in \C$ and some $i=1,2$ such that 
$$
I_p(\cA,\cB,\lambda)\le  \exp\left(B_d\log{H}/\log\log{H}\right)J_i(\cA,\cB,\lambda),
$$ 
where $J_1(\cA,\cB,\lambda)$ counts the number of solutions to 
\begin{equation}
\label{eq:eqn1}
(\alpha_0+\alpha_1h_1+\ldots+\alpha_d h_d)(\beta_0+\beta_1j_1+\ldots+\beta_e j_e)\equiv \lambda \mod{p},
\end{equation}
and 
\begin{equation}
\label{eq:zz1}
\rho_1h_1+\ldots+\rho_dh_d=\xi,
\end{equation} 
with variables $\vh\in \cH,\vj\in \cJ$ and $J_2(\cA,\cB,\lambda)$ counts the number of solutions 
to~\eqref{eq:eqn1}
and 
$$
\tau_1j_1+\ldots+\tau_ej_e=\xi,
$$
with variables $\vh\in \cH$, $\vj\in \cJ$.

Suppose first that 
\begin{equation}
\label{eq:IpJp}
I_p(\cA,\cB,\lambda)\le \exp\left(B_d\log{H}/\log\log{H}\right)J_1(\cA,\cB,\lambda),
\end{equation} 
the case 
\begin{equation}
\label{eq:IpJp1}
I_p(\cA,\cB,\lambda)\le \exp\left(B_d\log{H}/\log\log{H}\right)J_2(\cA,\cB,\lambda),
\end{equation} 
may be treated with a similar argument which we indicate at the end of the proof.

\subsection{Application of geometry of numbers to derive the desired inequality} 
Let $\cL$ denote the lattice 
$$
\cL=\left\{ (n_1,\ldots,n_d)\in \Z^{d} :~  \rho_1 n_1+\ldots+\rho_d n_d=0 \right\},
$$
and $D$ the convex body 
$$
D=\{ (n_1,\ldots,n_d) :~ |n_i|\le H\}.
$$ 
Assuming  $J_1(\cA,\cB,\lambda)\neq 0$, there exists some $\vh^{*}=(h^{*}_1,\ldots,h^{*}_d) \in D\cap \Z^d$ such that if $\vh=(h_1,\ldots,h_d)$ satisfies~\eqref{eq:zz1} then 
\begin{equation}
\label{eq:h123}
\vh-\vh^{*}\in \cL\cap 2D.
\end{equation}
By~\eqref{eq:rho1} we have 
$$
\dim \cL<d.
$$
Hence we may consider two cases, either 
\begin{equation}
\label{eq:case11}
\dim (\cL\cap 2D)<d-1,
\end{equation}
or 
\begin{equation}
\label{eq:case12}
\dim (\cL\cap 2D)=d-1.
\end{equation}

Suppose that we have~\eqref{eq:case11}. Let $\cL^{*}$ denote the lattice generated by $\cL\cap 2D$, so that $\dim \cL^{*}=r$ for some $r<d-1$.  By Lemma~\ref{lem:HB} there exists a basis $\blambda_1,\ldots,\blambda_{r},$ for $\cL^{*}$ such that each  $\vh$ satisfying~\eqref{eq:h123} may be expressed in the form 
\begin{equation}
\label{eq:glattice1}
\vh-\vh^{*}=k_1\blambda_1+\ldots+k_r \blambda_r,
\end{equation}
where from~\eqref{eq:h123} 
$$
k_1, \ldots, k_r \ll \frac{\|\vh-\vh^{*}\|}{\|\blambda_j\|} \ll H.
$$

Substituting~\eqref{eq:glattice1} into~\eqref{eq:eqn1}, there exists $\talpha_0,\ldots,\talpha_r\in \F_p$ such that for any $\vh \in \cH$, $\vj\in \cJ$ satisfying~\eqref{eq:eqn1} and~\eqref{eq:zz1} there exists $\ell_1,\ldots,\ell_r$ such that
$$
(\talpha_0+\talpha_1\ell_1+\ldots + \talpha_r\ell_r)(\beta_0 +\beta_1h_1+\ldots +\beta_eh_e)\equiv \lambda \mod{p},
$$
and 
$$
\talpha_0+\talpha_1\ell_1+\ldots + \talpha_r\ell_r= \alpha_0+\alpha_1h_1+\ldots+\alpha_dh_d.
$$
Let  $\td=r$ and let $\tcA$ denote the generalized arithmetic progression 

$$
\tcA=\left \{ \talpha_0+\talpha_1 \widetilde  h_1+\ldots +\talpha_{\td} \widetilde h_{\td} :~
 \left|\widetilde h_i\right |\le \tCde H, \ i =1, \ldots, \td\right \}.
$$
From construction of $\cH$, for each $a\in \cA$ there exists a unique $\bf{h}\in \cH$ such that 
$$
a=\alpha_0+\alpha_1h_1+\ldots+\alpha_dh_d.
$$
For each $\bf{h}\in \cH$, there exists some $\widetilde{\bf h}$ satisfying 
$$ \left|\widetilde h_i\right|\le \tCde H, \quad  i=1,\ldots, \td,$$
such that 
$$ \alpha_0+\alpha_1h_1+\ldots+\alpha_d h_d=\talpha_0+\talpha_1\widetilde h_1+\ldots +\talpha_{\td} 
\widetilde h_{\td}.$$
The above implies we may choose a set 
$$\widetilde \cH\subseteq [-\tCde H,\tCde H]^{\td},$$
such that the points 
$$\talpha_0+\talpha_1\widetilde h_1+\ldots +\talpha_{\td}\widetilde h_{\td}, \quad \widetilde{\bf{h}}\in \cH_0,$$
are distinct and for each $\bf{h}\in \cH$ satisfying~\eqref{eq:eqn1} and~\eqref{eq:zz1} there exists some $\widetilde{\bf{h}}\in \cH_0$ such that 
$$ \alpha_0+\alpha_1h_1+\ldots+\alpha_d h_d=\talpha_0+\talpha_1\widetilde h_1+\ldots +\talpha_{\td}. 
\widetilde h_{\td}.$$
The above combined with~\eqref{eq:IABK} implies that
$$J_1(\cA,\cB,\lambda)\le I_p(\tcA,\cB,\lambda).$$ 
Hence, recalling~\eqref{eq:IpJp}, we obtain the desired result provided~\eqref{eq:case11} holds.

If we are in the case of~\eqref{eq:case12}, then by Lemma~\ref{lem:linear} there exists integers $a_i$ and $b_i$ satisfying 
$$
\frac{\rho_i}{\rho_d}=\frac{a_i}{b_i}, \quad \gcd(a_i,b_i)=1, \quad a_i, b_i \ll H^{d}, \qquad 1\le i \le d,
$$
where by symmetry we assume $j=1$ and  also use that $\rho_1=1$ in our application of Lemma~\ref{lem:linear}. 
By~\eqref{eq:large p}, provided that $C_d$ is large enough,  
 we see that 
$$
\text{if} \quad a_i,b_i\neq 0 \quad \text{then} \quad a_i,b_i\not \equiv 0 \mod{p}.
$$ 
By~\eqref{eq:zz1} and~\eqref{eq:h123}
$$
h_d-h_d^{*}=\frac{\alpha_1}{\alpha_d}(h_1^{*}-h_1)+\ldots+\frac{\alpha_{d-1}}{\alpha_d}(h_{d-1}^{*}-h_{d-1}),
$$
which combined with the above implies 
\begin{equation}
\label{eq:gdcase2}
h_d\equiv h_d^{*}-a_1\overline{b_1}(h_1-h_1^{*})-\ldots - a_{d-1}\overline{b_{d-1}}(h_{d-1}-h_{d-1}^{*}) \mod{p},
\end{equation}
where $\overline{x}$ denotes the multiplicative inverse of $x$ modulo  $p$. As before, substituting~\eqref{eq:gdcase2} into~\eqref{eq:eqn1}, there exists a generalized arithmetic progression 
$$
\tcA=\{ \talpha_0+\talpha_1\ell_1+\ldots +\talpha_{\td} \ell_{\td} :~ |\ell_i|\le \tCde H, \ i =1, \ldots, \td \},
$$ 
with $\td<d$
such that
$$
J_1(\cA,\cB,\lambda)\le I_p(\tcA,\cB,\lambda),
$$
 and the result follows combining this with~\eqref{eq:IpJp}.

In the case of~\eqref{eq:IpJp1}, we apply a similar argument as before, except with the  lattice 
$$
\cL=\left\{ (n_1,\ldots,n_e)\in \Z^{e} :~  \tau_1 n_1+\ldots+\tau_e n_e=0 \right\},
$$
and convex body 
$$
D=\{ (n_1,\ldots,n_e) :~ |n_i|\le H, \ i =1, \ldots, e\},
$$ 
to obtain 
$$
J_2(\cA,\cB,\lambda)\le I_p(\cA,\tcB,\lambda),
$$ 
for some generalized arithmetic progression $\tcB$ of the form 
$$
\tcB=\{ \tbeta_0+\tbeta_1h_1+\ldots +\tbeta_{\te} h_{\te} :~ |h_i|\le \tCde H, \ i =1, \ldots, \te\},
$$
with $\te<e$.  
Combining this with~\eqref{eq:IpJp1}  we obtain the desired inequality under the assumption~\eqref{eq:large p}. 
To conclude that proof it remains to verify~(ii), about the divisibility of $Z_{d,e}$.

\subsection{Prime divisors of $Z_{d,e}$} 
We now show that $Z_{d,e}$ is divisible by all  primes $p \le   H^{d+e+o(1)}$. Fix some small $\varepsilon>0$ and consider generalised arithmetic progressions of the form~\eqref{eq:Aiter} and~\eqref{eq:Biter}. 


We next use the Dirichlet pigeon-hole principle to show the statement of Lemma~\ref{lem:iter} fails  
for any prime $H^d < p \le  H^{d+e-\varepsilon}$ provided that $H$ is 
large enough. This is sufficient from~\eqref{eq:Z0def} and the fact that $Z_0|Z_{d,e}$, (provided $C_d>1$). 

Indeed, since the value of $Z_{d,e}$ does not depend on the generalised arithmetic progressions $\cA_0$ and $\cB_0$, we can 
choose 
$$
\alpha_0 = 0 \mand  \alpha_i = (2H+1)^{i-1}, \quad i =1, \ldots, d,
$$
and 
$$
\beta_0 = 0 \mand  \beta_i = (2H+1)^{j-1}, \quad j =1, \ldots, e.
$$
Hence  $\cA_0$ and $\cB_0$ are proper and in fact  contain $(2H+1)^d$ and $(2H+1)^e$
distinct residues modulo $p$, respectively.  
Next, there are $ (2H+1)^{d+e}$ products in $\overline{\mathbb{F}}_p$
$$
\(\alpha_0+\alpha_1h_1+\ldots +\alpha_d h_d\) \(\beta_0+\beta_1j_1+\ldots +\beta_e j_e\)  
$$
over all choices of $|h_i|\le H$, $i =1, \ldots, d$ and  
$|j_i|\le  H$, $i =1, \ldots, e$, except for at most $O\(H^{d+e-1}\)$ 
choices for which this product is divisible by $p$. Hence, there exists a non-zero residue class $\lambda_0$ modulo $p$ into which 
at least 
$$ 
(2H+1) ^{d+e} \(1+ O\(H^{-1}\)\)/p \gg H^{\varepsilon}
$$ 
of such products fall, thus giving 
$$
I_p(\cA_0,\cB_0,\lambda_0) \gg H^{\varepsilon}
$$
 contradicting the assumed bound. 

Hence we have $p \mid Z_{d,e}$ for such primes. This also implies that  the assumption~\eqref{eq:large p} holds for any prime $p\nmid Z_{d,e}$. 
 
\section{Proofs of results on factorisation in generalised arithmetic progressions} 

 \subsection{Proof of Theorem~\ref{thm:main1}} 
 We proceed by induction on $d+e$  with base case 
$$d+e=1.$$
In this case, there exists some $\lambda_0\in \F_p$ such that
$$I_{p}(\cA,\cB,\lambda)=|\{ 1\le h\le H \ : \ h=\lambda_0 \}|,$$
 for which there is at most $1$ solution provided $H\le p$. Hence the result 
follows by taking 
$$Z=\prod_{p\le H}p.$$
 
Let $C_*(\ell) $ be sufficiently large depending only on the implied constants in Lemma~\ref{lem:iter}.

  We next set up some notation related to our induction hypothesis. Let $H\gg 1,$ $\ell\ge 2$  and for each pair of positive integers $d,e$ satisfying 
$$e\le d \mand  d+e \le \ell,$$
let $Z_{d,e}$ be as in Lemma~\ref{lem:iter}. Define 
$$
Z_{\ell}=\prod_{\substack{0<e\le d \\ d+e\le \ell \\}}Z_{d,e},
$$
so that $Z_{\ell}$ is $O(H^{1/\gamma_{\ell}})$-smooth and satisfies 
$$
\log{Z_{\ell}}\ll \log{H}\sum_{3\le j \le \ell}\sum_{\substack{0<e\le d \\ d+e=j \\}}H^{j(d+1)(e+1)}\ll H^{(\ell-1)(\ell+2)^2/4} \log H
$$
where we have used 
$$
j(d+1)(e+1) \le j \(\frac{d+e+2}{2}\)^2 \le \ell \frac{(\ell+2)^2}{4}.
$$

We formulate our induction hypothesis as follows. 
There exists a constant  $b_{\ell-1}$ such that for any positive integers $e\le d$ 
satisfying $d+e \le \ell-1$  and  prime
\begin{equation}
\label{eq:p-large}
p \ge  C_*(\ell) H^{\ell-1}
\end{equation} 
not dividing $Z_{\ell-1}$ (which by Lemma~\ref{lem:iter}~(ii) holds for any $p \nmid Z_{\ell-1}$), for any $\lambda \in \F_p$ and generalized arithmetic progressions

\begin{align*}
\cA&=\left \{\alpha_0+ \alpha_1h_1+\ldots+\alpha_dh_d :~ 1\le h_i\le H, \ i =1, \ldots, d\right \},\\
\cB&=\left \{\beta_0+ \beta_1j_1+\ldots+\beta_ej_e :~ 1\le j_i\le H, \ i =1, \ldots, e\right \},
\end{align*}
 we have 
$$
I_p(\cA,\cB,\lambda)\le \exp\left(b_{\ell-1}\log{(|\cA||\cB|)}/\log \log{(|\cA||\cB|)}\right). 
$$
 
Let $e\le d$ be positive integers satisfying 
$$d+e=\ell$$
and $H\gg 1$. 

By Lemma~\ref{lem:iter},  for any prime $p \nmid   Z_\ell$, and thus satisfying
$$
p\gg  H^{d+e}
$$
the following holds.

 Let $\lambda\in \overline\F_p^*$
and $\cA,\cB\subseteq \overline\F_p$  generalised arithmetic progressions
as  in~\eqref{eq:Aiter} and  satisfying \eqref{eq:Biter}
with $d,e\ge 2$ and
$$\alpha_1,\ldots,\alpha_d,\beta_1,\ldots,\beta_e \in \overline \F_p^{*}.
$$  
 There exists a constant $\tCde$  depending only on $d,e$,  integers $\td$ and $\te$ satisfying 
$$
\td \le d, \qquad \te \le e, \qquad  \td+\te<d+e,
$$ 
 generalised arithmetic progressions $\tcA,\tcB$ of the form
\begin{align*}
& \tcA=\{ \talpha_0+\talpha_1h_1+\ldots +\talpha_{\td} h_{\td} :~ |h_i|\le \tCde H, \  i =1, \ldots, \td\},\\
& \tcB=\{ \tbeta_0+ \tbeta_1j_1+\ldots + \tbeta_{\te} j_{\te} :~ |j_i|\le \tCde H, \ i =1, \ldots, \te \},
\end{align*}
with 
$$\talpha_1 \ldots,\talpha_d, \tbeta_1 \ldots \tbeta_e \in \overline \F_p^{*},$$
and some $\mu \in \overline\F_p^*$ such that 
$$
I_p(\cA,\cB,\lambda)\le \exp\left(B_d \log{H}/\log\log{H}\right)I_p(\tcA,\tcB,\mu ).
$$ 
If $p \nmid Z_{\ell}$ then  we obviously have
$p \nmid Z_{\ell-1}$  (since $Z_{\ell-1} \mid Z_\ell$) and also $p$ satisfies~\eqref{eq:p-large}.
Therefore, by our induction hypothesis
(where we can also assume that $\Cde \ge C_*(d+e-1)$) 
$$
I_p(\cA,\cB,\lambda) \ll \exp\left((B_d+b_{\ell-1}) \log{H}/\log\log{H}\right).
$$
The result now follows by taking 
$$b_\ell =\max_{d \le \ell} B_d+b_{\ell-1}, \quad Z=Z_{\ell}$$
and noting  
 that $Z_\ell$ is $O(H^{1/\gamma_{\ell+1}})$-smooth and satisfies 
$$
\log Z_\ell\ll H^{\ell(\ell+2)^2/4} \log H.
$$

 \section{Proofs of results towards  the Erd\H{o}s--Szemer{\'e}di conjecture} 

\subsection{Proof of Theorem~\ref{thm:main24}}  
The celebrated theorem of Freiman~\cite{Frei}  states that if $\cA\subseteq \Z$ is a finite set satisfying 
$$|\cA+\cA|\le K|\cA|,$$  
then there exist  constants $b(K)$ and $d(K)$ depending only on $K$, and some generalised arithmetic progression 
$\cB$ of rank $d(K)$ and size 
$$|\cB|\le b(K) |\cA|,$$
such that 
$$\cA\subseteq \cB.$$

The theorem of Freiman~\cite{Frei}  has gone through a number of improvements and generalisations
to sets from arbitrary abelian groups. 

A version of this result convenient for our application is due to  Cwalina and  Schoen~\cite[Theorem~4]{CS}, which states that we may take $\cB$ proper,
$$
b(K) \le   \exp\(c K^4(\log{K+2})\)  \mand 
d(K) \le 2K,
$$  
for some absolute constant $c$ (note the additive group of $\F_p$ has no proper subgroups, 
so only the first alternative of~\cite[Theorem~4]{CS} applies).

Thus, using Corollaries~\ref{cor:all-primes} and~\ref{cor:main22} with $H = |\cA|$, $d = e = d(K)$, 
$$
\delta=\gamma_{2d(K)+1}=\frac{1}{(44K+26)2^{12K+8}}  
\quad \text{and}\quad  c_0(K) = C_0(2K), 
$$  
where  $C_0(d)$ is as in Corollary~\ref{cor:all-primes} (which we can assume to be 
monotonically increasing with respect to both $d$ and $e$), 
we obtain that for each $\lambda \in \F_q^{*}$, the number of solutions to each of the equations 
$$
a_1a_2 =  \lambda ,  \qquad  a_1^{-1}+ a_2^{-1}= \lambda,
 \qquad  a_1^{2}+ a_2^{2}= \lambda ,
$$
over $\F_p$
with variables $a_1,a_2\in \cA$ is $|\cA|^{o(1)}$ since we assume that $K$ is fixed, from which the desired result follows.

\subsection{Proof of Theorem~\ref{thm:main24-AA}}  

We follow the proof of Theorem~\ref{thm:main24} however apply Corollary~\ref{cor:almost-all-primes} instead of Corollary~\ref{cor:all-primes}.

\section*{Acknowledgement}

The authors would like to thank Giorgis Petridis for pointing out that  Theorem~\ref{thm:main24}
is a finite field analogue of a result of Elekes and Ruzsa~\cite{ER}.
The authors are also grateful Misha Rudnev for many useful comments and 
queries, which helped to discover a gap in the initial version. 

During this work, B.K. was  supported  by Australian Research Council Grant~DP160100932,  Academy of Finland Grant~319180 and the Max Planck Institute for Mathematics, 
J.M.  
by Australian Research Council Grant~~DP180100201, by NSERC and by the Max Planck Institute for Mathematics,  and I.S.  
by Australian Research Council Grants DP170100786 and DP180100201.

\end{document}